\newcommand {\Real}{\ensuremath{{\mathbb{R}}}}
\newcommand {\Natural}{\ensuremath{{\mathbb{N}}}}
\newcommand{\V}{\ensuremath{\mathcal V}}
\newcommand{\setH}{\ensuremath{\mathcal H}}
\newcommand{\M}{\ensuremath{\mathcal M}}
\newcommand{\setE}{\ensuremath{\mathcal E}}
\newcommand{\Y}{\ensuremath{\mathcal Y}}
\newcommand{\G}{\ensuremath{\mathcal G}}
\newcommand{\one}{\ensuremath{{\mathbf{1}}}}
\newtheorem{theorem}{Theorem}
\newtheorem{algorithm}{Algorithm}
\newtheorem{corollary}{Corollary}
\newtheorem{lemma}{Lemma}
\newtheorem{remark}{Remark}
\newtheorem{assumption}{Assumption}
\newtheorem{example}{Example}
\newenvironment{proof}{\noindent {\bf Proof.}}{\hfill \hspace*{1pt}\hfill$\blacksquare$}
\begin{document}

\title{Synchronization under matrix-weighted Laplacian}
\author{S. Emre Tuna\footnote{The author is with Department of
Electrical and Electronics Engineering, Middle East Technical
University, 06800 Ankara, Turkey. Email: {\tt
tuna@eee.metu.edu.tr}}} \maketitle

\begin{abstract}
Synchronization in a group of linear time-invariant systems is
studied where the coupling between each pair of systems is
characterized by a different output matrix. Simple methods are
proposed to generate a (separate) linear coupling gain for each
pair of systems, which ensures that all the solutions converge to
a common trajectory. Both continuous-time and discrete-time cases
are considered.
\end{abstract}

\section{Introduction}

Synchronization (consensus) of linear systems with general
dynamics (as opposed to first- or second-order integrators) has
been thoroughly investigated in the last decade. Early results
established the convergence of the solutions of coupled systems to
a common trajectory via static linear feedback under the condition
that the network topology is fixed \cite{tuna08,tuna09}. Later,
time-varying topologies were allowed in \cite{yang11}. As the
limitations of the static feedback have gradually been overcome,
more general results employing dynamic feedback emerged; see, for
instance, \cite{seo09,li10} for fixed and \cite{seo12,li13} for
time-varying topologies.

All of the above-mentioned works, in fact the majority of the
studies on synchronization of dynamical systems, cover the simple
case
\begin{eqnarray}\label{eqn:CTsimple}
{\dot x}_{i}=\sum_{j=1}^{q}a_{ij}(x_{j}-x_{i})\,,\qquad
i=1,\,2,\,\ldots,\,q
\end{eqnarray}
(where $a_{ij}\in\Real_{\geq 0}$ and $x_{i}\in\Real^{n}$) as a
corollary of their main result. An equivalent representation of
these systems reads ${\dot x}=-[L_{1}\otimes I_{n}]x$ where
$x=[x_{1}^{T}\ x_{2}^{T}\ \cdots\ x_{q}^{T}]^{T}$ and
$L_{1}\in\Real^{q\times q}$ is the (weighted) Laplacian matrix
\cite{olfati04} whose spectral properties have proved extremely
useful in the analysis and design of multi-agent systems.

A pleasant thing about \eqref{eqn:CTsimple} is that its geometric
meaning is clear: {\em ``Each agent moves towards the weighted
average of the states of its neighbors."} as stated in
\cite{cao13}. In fact, in the Euler discretization
\begin{eqnarray}\label{eqn:DTsimple}
x_{i}^{+}=x_{i}+\varepsilon\sum_{j=1}^{q}a_{ij}(x_{j}-x_{i})=\sum_{j=1}^{q}w_{ij}x_{j}
\end{eqnarray}
the righthand side becomes {\em the} weighted average for
$\varepsilon>0$ small enough. There are many ways to define {\em
average} and, qualitatively speaking, what any average attempts to
achieve is to compute some sort of {\em center} of the points
considered in the computation. Therefore an excusable and
sometimes even useful choice for weighted arithmetic mean is
obtained by replacing the scalar weights $w_{ij}$ in
\eqref{eqn:DTsimple} by symmetric positive semidefinite matrices
$P_{ij}=P_{ij}^{T}\geq 0$ satisfying $\sum_{j}P_{ij}=I_{n}$. This
suggests on \eqref{eqn:CTsimple} the modification
\begin{eqnarray*}\label{eqn:CTnotsosimple}
{\dot x}_{i}=\sum_{j=1}^{q}Q_{ij}(x_{j}-x_{i})
\end{eqnarray*}
where $Q_{ij}\in\Real^{n\times n}$ are symmetric positive
semidefinite matrices replacing the scalar weights $a_{ij}$. (We
take $Q_{ii}=0$.) Whence follows the dynamics ${\dot x}=-Lx$ where
\begin{eqnarray}\label{eqn:genLap}
L=\left[\begin{array}{cccc}\sum_{j} Q_{1j}&-Q_{12}&\cdots&-Q_{1q}\\
-Q_{21}&\sum_{j}Q_{2j}&\cdots&-Q_{2q}\\
\vdots&\vdots&\ddots&\vdots\\
-Q_{q1}&-Q_{q2}&\cdots&\sum_{j}Q_{qj}
\end{array}\right]_{qn\times qn}
\end{eqnarray}
is the {\em matrix-weighted Laplacian}. In graph theoretical terms
one can say that the graph (with $q$ vertices) associated to this
$L$ is such that to each edge a nonzero positive semidefinite
matrix $Q_{ij}$ is assigned. Note that for the standard Laplacian
the associated graph's edges are assigned weights $a_{ij}$ that
are merely positive scalars.

This paper deals with linear time-invariant systems. We consider a
synchronization problem where the matrix-weighted Laplacian
naturally appears as a tool for both analysis and design. In
particular, we study a group of systems whose uncoupled dynamics
(described by the matrix $A$) are identical and the communication
between each pair $(i,\,j)$ of systems has to be realized via a
(possibly) different output matrix $C_{ij}$. Our goal for this
setup is to generate linear gains $G_{ij}$ to couple the pairs so
that all the solutions in the group converge to a common
trajectory. For $A$ neutrally stable, we achieve this goal under
detectability (of the pairs $(C_{ij},\,A)$ for $C_{ij}\neq 0$) and
symmetry ($C_{ij}=C_{ji}$). We also touch the more general
situation (where $A$ is allowed to yield unbounded solutions) and
establish synchronization under some additional conditions
concerning detectability and the strength of connectivity of the
network topology. We cover both continuous- and discrete-time
cases. Synchronization in an array where each pair of systems are
connected through a different output matrix $C_{ij}$ giving rise
to the matrix-weighted Laplacian is yet a relatively unexplored
area. Among the few works investigating this generalized Laplacian
matrix (in a system-theoretic setting) are \cite{barooah06,
barooah08}, where the authors analyze its spectral properties and
study certain relevant applications in distributed control and
estimation.

\section{Motivation}\label{sec:motivation}

In this section we provide two example arrays of coupled identical
systems where the matrix-weighted Laplacian $L$ appears naturally,
describing the interconnection of individual systems. The first
array is mechanical, the latter electrical.

\subsection{Coupled mass-spring systems}

\begin{figure}[h]
\begin{center}
\includegraphics[scale=0.5]{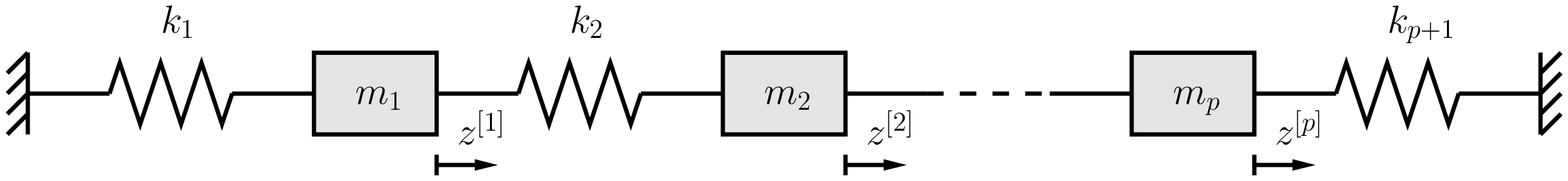}\label{fig:mass-spring}
\caption{Mass-spring system.}
\end{center}
\end{figure}

Consider the individual system in Fig.~1, where $p$ masses are
connected by linear springs. Let $z^{[i]}\in\Real$ be the
displacement of the mass $m_{i}>0$ from the equilibrium. The
spring constants are denoted by $k_{i}>0$. Letting $z=[z^{[1]}\
z^{[2]}\ \cdots\ z^{[p]}]^{T}$ the model of this system reads
$M{\ddot z}+Kz=0$ where $M={\rm
diag}(m_{1},\,m_{2},\,\ldots,\,m_{p})$ and
\begin{eqnarray*}
K=\left[\begin{array}{ccccc}k_{1}+k_{2}&-k_{2}&0&\cdots&0\\-k_{2}&k_{2}+k_{3}&-k_{3}&\cdots&0\\0&-k_{3}&k_{3}+k_{4}&\cdots&0\\
\vdots&\vdots&\vdots&\ddots&\vdots\\0&0&0&\cdots&k_{p}+k_{p+1}\end{array}\right]
\end{eqnarray*}
Let now an array be formed by coupling $q$ replicas of this system
in the arrangement shown in Fig.~2. If we let $z_{i}\in\Real^{p}$
denote the displacement vector for the $i$th system and
$b_{ij}^{[k]}=b_{ji}^{[k]}\geq 0$ represent the viscous friction
(damping) between the $k$th masses of the systems $i$ and $j$, we
can write the dynamics of the coupled systems as $M{\ddot
z}_{i}+Kz_{i}+\sum_{j=1}^{q}B_{ij}({\dot z}_{i}-{\dot z}_{j})=0$
where $B_{ij}={\rm
diag}(b_{ij}^{[1]},\,b_{ij}^{[2]},\,\ldots,\,b_{ij}^{[p]})$.
Letting $x_{i}=[z_{i}^{T}\ {\dot z}_{i}^{T}]^{T}$ denote the state
of the $i$th system we at once obtain
\begin{eqnarray}\label{eqn:mass-spring}
{\dot
x}_{i}=\left[\begin{array}{cc}0&I_{p}\\-M^{-1}K&0\end{array}\right]x_{i}
+\sum_{j=1}^{q}\left[\begin{array}{cc}0&0\\0&M^{-1}B_{ij}\end{array}\right](x_{j}-x_{i})
\end{eqnarray}
Under the coordinate change below
\begin{eqnarray*}
\xi_{i}:=\left[\begin{array}{cc}K^{1/2}&0\\0&M^{1/2}\end{array}\right]x_{i}
\end{eqnarray*}
we can transform \eqref{eqn:mass-spring} into
\begin{eqnarray}\label{eqn:mass-spring-skew}
{\dot \xi}_{i}=S\xi_{i} +\sum_{j=1}^{q}Q_{ij}(\xi_{j}-\xi_{i})
\end{eqnarray}
where
\begin{eqnarray*}
S:=\left[\begin{array}{cc}0&K^{1/2}M^{-1/2}\\-M^{-1/2}K^{1/2}&0\end{array}\right]\quad\mbox{and}\quad
Q_{ij}:=\left[\begin{array}{cc}0&0\\0&M^{-1/2}B_{ij}M^{-1/2}\end{array}\right]
\end{eqnarray*}
Note that $S$ is skew-symmetric and $Q_{ji}=Q_{ij}=Q_{ij}^{T}\geq
0$. Finally, stacking the individual states into a single vector
$\xi=[\xi_{1}^{T}\ \xi_{2}^{T}\ \cdots\ \xi_{q}^{T}]^{T}$ the
dynamics of the array take the form
\begin{eqnarray*}\label{eqn:mass-spring-array}
{\dot \xi}=([I_{q}\otimes S]-L)\xi
\end{eqnarray*}
where $L$ is as defined in \eqref{eqn:genLap}.

\begin{figure}[h]
\begin{center}
\includegraphics[scale=0.45]{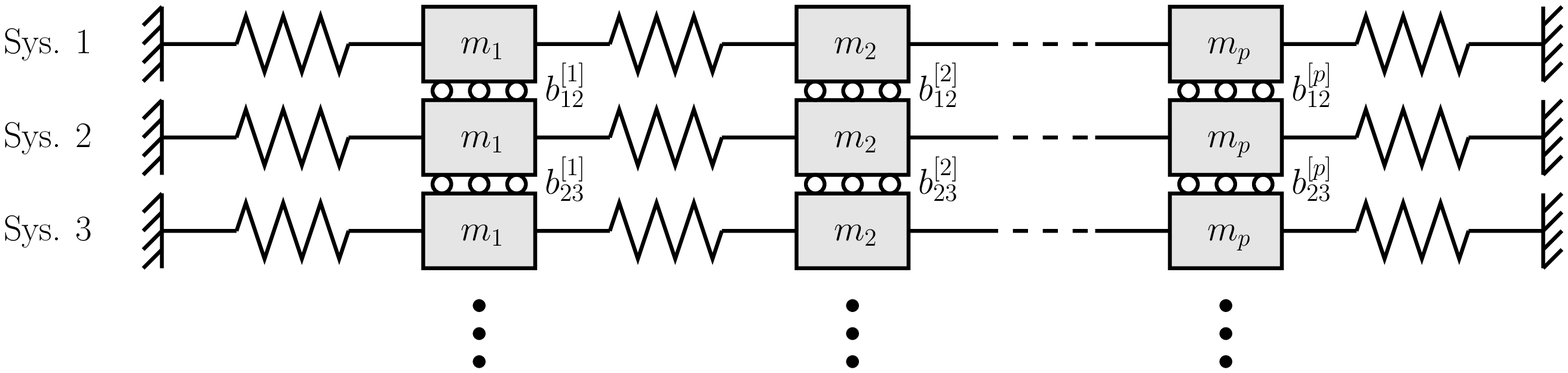}\label{fig:mass-spring-array}
\caption{Array of coupled mass-spring systems.}
\end{center}
\end{figure}

\subsection{Coupled LC oscillators}

\begin{figure}[h]
\begin{center}
\includegraphics[scale=0.5]{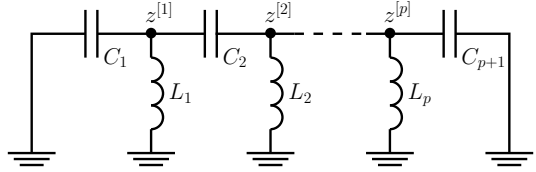}
\caption{LC oscillator system.}
\end{center}
\end{figure}\label{fig:LCcircuit}

Consider the individual system in Fig.~3, where $p$ linear
inductors ($L_{i}>0$) are connected by linear capacitors
($C_{i}>0$). The node voltages are denoted by $z^{[i]}\in\Real$.
Letting $z=[z^{[1]}\ z^{[2]}\ \cdots\ z^{[p]}]^{T}$ the model of
this system reads $C{\ddot z}+L^{-1}z=0$ where $L={\rm
diag}(L_{1},\,L_{2},\,\ldots,\,L_{p})$ and
\begin{eqnarray*}
C=\left[\begin{array}{ccccc}C_{1}+C_{2}&-C_{2}&0&\cdots&0\\-C_{2}&C_{2}+C_{3}&-C_{3}&\cdots&0\\0&-C_{3}&C_{3}+C_{4}&\cdots&0\\
\vdots&\vdots&\vdots&\ddots&\vdots\\0&0&0&\cdots&C_{p}+C_{p+1}\end{array}\right]
\end{eqnarray*}
This time we form the array by coupling $q$ replicas of this
system in the arrangement shown in Fig.~4. If we let
$z_{i}\in\Real^{p}$ denote the node voltage vector for the $i$th
system and $g_{ij}^{[k]}=g_{ji}^{[k]}\geq 0$ be the conductance of
the resistor connecting the $k$th nodes of the systems $i$ and
$j$, we can write the dynamics of the coupled systems as $C{\ddot
z}_{i}+L^{-1}z_{i}+\sum_{j=1}^{q}G_{ij}({\dot z}_{i}-{\dot
z}_{j})=0$ where $G_{ij}={\rm
diag}(g_{ij}^{[1]},\,g_{ij}^{[2]},\,\ldots,\,g_{ij}^{[p]})$.
Letting $x_{i}=[z_{i}^{T}\ {\dot z}_{i}^{T}]^{T}$ denote the state
of the $i$th system we at once obtain
\begin{eqnarray}\label{eqn:LC-oscillator}
{\dot
x}_{i}=\left[\begin{array}{cc}0&I_{p}\\-C^{-1}L^{-1}&0\end{array}\right]x_{i}
+\sum_{j=1}^{q}\left[\begin{array}{cc}0&0\\0&C^{-1}G_{ij}\end{array}\right](x_{j}-x_{i})
\end{eqnarray}
Under the coordinate change below
\begin{eqnarray*}
\xi_{i}:=\left[\begin{array}{cc}L^{-1/2}&0\\0&C^{1/2}\end{array}\right]x_{i}
\end{eqnarray*}
we can transform \eqref{eqn:LC-oscillator} into
\begin{eqnarray}\label{eqn:LC-oscillator-skew}
{\dot \xi}_{i}=S\xi_{i} +\sum_{j=1}^{q}Q_{ij}(\xi_{j}-\xi_{i})
\end{eqnarray}
where
\begin{eqnarray*}
S:=\left[\begin{array}{cc}0&L^{-1/2}C^{-1/2}\\-C^{-1/2}L^{-1/2}&0\end{array}\right]\quad\mbox{and}\quad
Q_{ij}:=\left[\begin{array}{cc}0&0\\0&C^{-1/2}G_{ij}C^{-1/2}\end{array}\right]
\end{eqnarray*}
Note that $S$ is skew-symmetric and $Q_{ji}=Q_{ij}=Q_{ij}^{T}\geq
0$. Finally, as was the case with the mechanical array, the
dynamics of the electrical array reads ${\dot \xi}=([I_{q}\otimes
S]-L)\xi$ where $L$ is the matrix-weighted
Laplacian~\eqref{eqn:genLap}.

\begin{figure}[h]
\begin{center}
\includegraphics[scale=0.5]{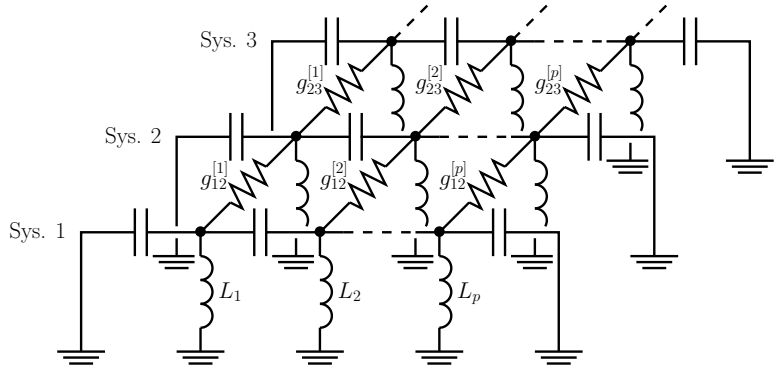}
\caption{Array of LC oscillator systems.}
\end{center}
\end{figure}\label{fig:LCarray}

\section{Problem definition}

In this paper we consider a group of linear systems
\begin{subeqnarray}\label{eqn:CTsystem}
{\dot x}_{i}&=&Ax_{i}+u_{i}\,,\qquad i=1,\,2,\,\ldots,\,q\\
\Y_{i}&=&\{C_{i1}(x_{1}-x_{i}),\,C_{i2}(x_{2}-x_{i}),\,\ldots,\,C_{iq}(x_{q}-x_{i})\}
\end{subeqnarray}
with $A\in\Real^{n\times n}$, where $x_{i}\in\Real^{n}$ is the
state and $u_{i}\in\Real^{n}$ is the (control) input of the $i$th
system. The output set $\Y_{i}$ contains the relative measurements
available to the $i$th system, where $C_{ij}\in\Real^{m_{ij}\times
n}$ and $C_{ii}=0$. Associated to the set $\{C_{ij}\}$, we let the
graph $\G=(\V,\,\setE)$ represent the network topology, where
$\V=\{v_{1},\,v_{2},\,\ldots,\,v_{q}\}$ is the set of vertices and
a pair $(v_{j},\,v_{i})$ belongs to the set of edges $\setE$ when
$C_{ij}\neq 0$.

The problem we study is the stabilization of the synchronization
subspace of the systems~\eqref{eqn:CTsystem}. In particular, we
search for a simple method for choosing the gains
$G_{ij}\in\Real^{n\times m_{ij}}$ such that under the controls
\begin{eqnarray}\label{eqn:CTcontrol}
u_{i}=\sum_{j=1}^{q}G_{ij}C_{ij}(x_{j}-x_{i})
\end{eqnarray}
the systems~\eqref{eqn:CTsystem} (asymptotically) synchronize.
That is, the solutions satisfy $\|x_{i}(t)-x_{j}(t)\|\to 0$ as
$t\to\infty$ for all indices $i,\,j$ and all initial conditions.
We establish synchronization under two different sets of
conditions. We first study the general case where the uncoupled
dynamics ${\dot z}=Az$ are allowed to have unbounded solutions and
provide certain sufficient conditions for synchronization. Later
we will show that if $A$ is neutrally stable, which was the case
with the mechanical and electrical arrays considered earlier, then
synchronization can be achieved under much weaker assumptions.

\section{Synchronization under CL-detectability}\label{sec:UD}

In this section we study synchronization under the assumption
below.

\begin{assumption}\label{assume:CTunstable}
The following conditions hold on the systems~\eqref{eqn:CTsystem}.
\begin{enumerate}
\item $C_{ij}=C_{ji}$ for all $i,\,j$. \item $\G$ is
connected.\footnote{Note that $\G$ becomes undirected under the
first condition.} \item There exists a symmetric positive definite
matrix $P\in\Real^{n\times n}$ such that
\begin{eqnarray}\label{eqn:commonP}
A^{T}P+PA<C_{ij}^{T}C_{ij}\quad\mbox{for all}\quad C_{ij}\neq 0\,.
\end{eqnarray}
\end{enumerate}
\end{assumption}

\begin{remark}
Detectability of a pair $(C_{ij},\,A)$ is equivalent to the
existence of a symmetric positive definite matrix
$P_{ij}\in\Real^{n\times n}$ satisfying
$A^{T}P_{ij}+P_{ij}A<C_{ij}^{T}C_{ij}$. (This is sometimes called
the Lyapunov test for detectability \cite{hespanha09}.) The third
condition of Assumption~\ref{assume:CTunstable} therefore imposes
a certain kind of uniformity on the detectability of the
systems~\eqref{eqn:CTsystem} by letting the detectability of all
the individual pairs $(C_{ij},\,A)$ with $C_{ij}\neq 0$ be
established by a common $P_{ij}=P$. Therefore, referring to the
condition~\eqref{eqn:commonP}, we will henceforth use the term
{\em CL-detectability}, where {\em C} stands for {\em common} and
{\em L} for {\em Lyapunov}.
\end{remark}

Before we state our first theorem we introduce some notation
related to the graph $\G=(\V,\,\setE)$ associated to the
systems~\eqref{eqn:CTsystem}. The degree $d_{i}$ of the vertex
$v_{i}$ is the number of edges satisfying
$(v_{j},\,v_{i})\in\setE$. We let
$\Gamma=[\gamma_{ij}]\in\Real^{q\times q}$ denote the unweighted,
normalized Laplacian matrix defined as follows.
\begin{eqnarray*}
\gamma_{ij}=\left\{\begin{array}{rl}-1/q\,,&(v_{j},\,v_{i})\in\setE\\
d_{i}/q\,,&j=i\\
0\,,&\mbox{elsewhere}\end{array}\right.
\end{eqnarray*}
When $\G$ is undirected and connected, $\Gamma$ is symmetric
positive semidefinite with eigenvalues
$0=\lambda_{1}<\lambda_{2}\leq\cdots\leq\lambda_{q}\leq 1$. In
that case we write $\lambda_{2}(\Gamma)$ to denote the smallest
nonzero eigenvalue of $\Gamma$. Let $\one\in\Real^{q}$ be the
vector of all ones and define $J:=I_{q}-q^{-1}\one\one^{T}$. Note
that $J$ is the unweighted, normalized Laplacian matrix of a
complete graph and satisfies $\lambda_{2}(J)=1$ thanks to $J^2=J$.
Since $J$ and $\Gamma$ share the same eigenvectors (for $\G$
undirected and connected), one can readily establish the bounds
\begin{eqnarray}\label{eqn:Jbound}
\Gamma\leq J\leq \lambda_{2}(\Gamma)^{-1}\Gamma\,.
\end{eqnarray}

\begin{theorem}\label{thm:CTunstable}
Consider the systems~\eqref{eqn:CTsystem} under
Assumption~\ref{assume:CTunstable}. Let $\alpha\geq (2q)^{-1}$ and
$G_{ij}:=\alpha P^{-1}C_{ij}^{T}$ where $P$ satisfies
\eqref{eqn:commonP}. Then under the controls~\eqref{eqn:CTcontrol}
the systems synchronize if
\begin{eqnarray}\label{eqn:epsilonsigma}
\varepsilon>(\lambda_{2}(\Gamma)^{-1}-1)\sigma
\end{eqnarray}
where $\displaystyle \varepsilon:=\min_{C_{ij}\neq
0}\,\lambda_{\rm min}(C_{ij}^{T}C_{ij}-A^{T}P-PA)$ and
$\sigma:=\lambda_{\rm max}(A^{T}P+PA)$.
\end{theorem}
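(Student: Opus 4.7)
The plan is to adopt the Lyapunov candidate $V(x) := x^T[J \otimes P]x = \sum_{i=1}^q (x_i - \bar{x})^T P (x_i - \bar{x})$ with $\bar{x} := q^{-1}\sum_i x_i$; this quadratic form is nonnegative and vanishes precisely on the consensus subspace. The closed loop writes compactly as $\dot{x} = [I_q \otimes A]x - [I_q \otimes P^{-1}]Lx$, where $L$ is the matrix-weighted Laplacian \eqref{eqn:genLap} with blocks $Q_{ij} = \alpha C_{ij}^T C_{ij}$; the symmetry $C_{ij} = C_{ji}$ makes $L$ symmetric and positive semidefinite. Using the annihilation property $L(\one \otimes v) = 0$ for every $v$, which yields $[J \otimes I_n]L = L[J \otimes I_n] = L$, differentiation of $V$ along solutions gives
\[
\dot{V} = x^T[J \otimes H]x - 2\,x^T L x, \qquad H := A^T P + PA.
\]

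Next I would apply the CL-detectability bound edge by edge. Writing $L = \alpha \sum_{(i,j)\in\setE,\,i<j}(e_i - e_j)(e_i-e_j)^T \otimes C_{ij}^T C_{ij}$ and invoking $C_{ij}^T C_{ij} \succeq H + \varepsilon I_n$ (which is \eqref{eqn:commonP} written through the definition of $\varepsilon$) in each block, one obtains the matrix lower bound $L \succeq \alpha q\,\Gamma \otimes (H + \varepsilon I_n)$, because $\sum_{(i,j)\in\setE,\,i<j}(e_i - e_j)(e_i - e_j)^T$ is the standard unweighted Laplacian $q\Gamma$. Specialising to the critical value $\alpha = (2q)^{-1}$ produces
\[
\dot{V} \leq x^T[(J - \Gamma) \otimes H] x - \varepsilon\, x^T[\Gamma \otimes I_n] x.
\]

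At this point I would invoke \eqref{eqn:Jbound}. Since both $J - \Gamma \succeq 0$ and $\sigma I_n - H \succeq 0$, their Kronecker product is PSD, giving $x^T[(J - \Gamma) \otimes H] x \leq \sigma\, x^T[(J - \Gamma) \otimes I_n] x$; then $J - \Gamma \leq (\lambda_2(\Gamma)^{-1} - 1)\Gamma$ leads to
\[
\dot{V} \leq \bigl[(\lambda_2(\Gamma)^{-1} - 1)\sigma - \varepsilon\bigr]\, x^T[\Gamma \otimes I_n] x
\]
when $\sigma \geq 0$, and \eqref{eqn:epsilonsigma} makes the bracket strictly negative. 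When $\sigma < 0$ the condition \eqref{eqn:epsilonsigma} is automatic and the first Kronecker bound already renders $x^T[(J - \Gamma) \otimes H] x \leq 0$, so one has directly $\dot{V} \leq -\varepsilon\,x^T[\Gamma \otimes I_n]x$. In either case, combining with $x^T[\Gamma \otimes I_n]x \geq \lambda_2(\Gamma)\|y\|^2$ for $y := [J \otimes I_n]x$ and $V \leq \lambda_{\max}(P)\|y\|^2$ yields $\dot{V} \leq -\mu V$ for some $\mu > 0$, hence exponential decay of the synchronization error. The extension to $\alpha > (2q)^{-1}$ is automatic: the term $-2\,x^T L x$ is linear in $\alpha$ with $L \succeq 0$, so $\dot V$ at any fixed state only becomes more negative as $\alpha$ grows past $(2q)^{-1}$.

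The main technical obstacle is the Kronecker estimate of the third paragraph: the matrix $H = A^T P + PA$ need not be sign-definite, so one cannot simply replace it by a scalar multiple of $I_n$. The fix is to arrange the inequality so that PSD factors are isolated and only the one-sided bound $H \preceq \sigma I_n$ is used; the two-sided spectral comparison $\Gamma \leq J \leq \lambda_2(\Gamma)^{-1}\Gamma$ from \eqref{eqn:Jbound} then finishes the argument cleanly.
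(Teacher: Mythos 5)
Your proof is correct and follows essentially the same route as the paper's: the Lyapunov function $x^{T}[J\otimes P]x$, the edge-wise use of \eqref{eqn:commonP} to bound $L$ from below by $\Gamma\otimes(A^{T}P+PA+\varepsilon I_{n})$ (the paper's inequality \eqref{eqn:zakoncin}), the splitting $J=(J-\Gamma)+\Gamma$, and the spectral comparison \eqref{eqn:Jbound}. Your only departures are minor refinements — stating the edge bound as a matrix inequality, treating the $\sigma<0$ case explicitly, and closing with $\dot V\leq-\mu V$ instead of an invariance-type conclusion — none of which changes the argument.
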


\begin{proof}
Under the suggested controls, dynamics of the
systems~\eqref{eqn:CTsystem} become
\begin{eqnarray}\label{eqn:CTunstableclosedloop}
{\dot
x}_{i}=Ax_{i}+\sum_{j=1}^{q}{\alpha}P^{-1}C_{ij}^{T}C_{ij}(x_{j}-x_{i})\,,\qquad
i=1,\,2,\,\ldots,\,q\,.
\end{eqnarray}
Letting $x=[x_{1}^{T}\ x_{2}^{T}\ \cdots\ x_{q}^{T}]^{T}$ and
$Q_{ij}:=C_{ij}^{T}C_{ij}$ we can rewrite
\eqref{eqn:CTunstableclosedloop} as
\begin{eqnarray}\label{eqn:CTunstablearray}
{\dot x}=\left([I_{q}\otimes A]-\alpha[I_{q}\otimes
P^{-1}]L\right)x=:\Psi x
\end{eqnarray}
where $L$ is the matrix-weighted Laplacian~\eqref{eqn:genLap}.
Since $C_{ij}=C_{ji}$ the matrix $L$ is symmetric. It is also
positive semidefinite because we can write
\begin{eqnarray*}\label{eqn:becauseof}
x^{T}Lx=\sum_{j>i}(x_{j}-x_{i})^{T}Q_{ij}(x_{j}-x_{i})=\sum_{j>i}\|C_{ij}(x_{j}-x_{i})\|^{2}\,.
\end{eqnarray*}
Similarly, we can also write
\begin{eqnarray}\label{eqn:zakoncin}
\lefteqn{x^{T}\left([\Gamma\otimes(A^{T}P+PA)]-q^{-1}L\right)x}\hspace{1in}\nonumber\\
&=&
q^{-1}\sum_{j>i}(x_{j}-x_{i})^{T}(A^{T}P+PA-Q_{ij})(x_{j}-x_{i})\nonumber\\
&\leq&-q^{-1}\sum_{j>i}\varepsilon\|x_{j}-x_{i}\|^{2}\nonumber\\
&=&-\varepsilon x^{T}[\Gamma\otimes I_{n}]x\,.
\end{eqnarray}
By construction $L[\one\otimes I_{n}]=0$. This allows us to write
\begin{eqnarray}\label{eqn:Lcommute}
L[J\otimes I_{n}]&=&L[(I_{q}-q^{-1}\one\one^{T})\otimes I_{n}]\nonumber\\
&=&L[I_{q}\otimes I_{n}]-q^{-1}L[\one\otimes
I_{n}][\one^{T}\otimes I_{n}]\nonumber\\
&=&L\,.
\end{eqnarray}
By symmetry we also have $[J\otimes I_{n}]L=L$. Define
$V:\Real^{qn}\to\Real$ as $V(x):=x^{T}[J\otimes P]x$. We will
employ $V$ as a Lyapunov function for the synchronization subspace
$\{x:x_{i}=x_{j}\ \mbox{for all}\ i,\,j\}\subset\Real^{qn}$. To
this end, let us now study the time derivative of $V$ along the
solutions of the array~\eqref{eqn:CTunstablearray}. Using
\eqref{eqn:Jbound}, \eqref{eqn:zakoncin}, and \eqref{eqn:Lcommute}
we can write
\begin{eqnarray*}
\Psi^{T}[J\otimes P]+[J\otimes P]\Psi &=& [J\otimes
(A^{T}P+PA)]-\alpha(L[J\otimes I_{n}]+[J\otimes I_{n}]L)\\
&=& [J\otimes
(A^{T}P+PA)]-2\alpha L\\
&\leq&[J\otimes (A^{T}P+PA)]-q^{-1}L\\
&=&[(J-\Gamma)\otimes (A^{T}P+PA)]\\
&&\qquad+[\Gamma\otimes
(A^{T}P+PA)]-q^{-1}L\\
&\leq&\sigma[(J-\Gamma)\otimes I_{n}]-\varepsilon[\Gamma\otimes I_{n}]\\
&\leq&(\lambda_{2}(\Gamma)^{-1}-1)\sigma[\Gamma\otimes I_{n}]-\varepsilon[\Gamma\otimes I_{n}]\\
&=&-(\varepsilon-(\lambda_{2}(\Gamma)^{-1}-1)\sigma)[\Gamma\otimes
I_{n}]\,.
\end{eqnarray*}
Therefore we have established
\begin{eqnarray}\label{eqn:Vdot}
\frac{d}{dt}V(x(t))\leq-\delta\, x(t)^{T}[\Gamma\otimes I_{n}]x(t)
\end{eqnarray}
where $\delta:=\varepsilon-(\lambda_{2}(\Gamma)^{-1}-1)\sigma$.
Now, since $[\Gamma\otimes I_{n}]$ is positive semidefinite,
\eqref{eqn:Vdot} implies the following. If
\eqref{eqn:epsilonsigma} holds, i.e., $\delta>0$, then the
solutions converge to the set $\{x:x^{T}[\Gamma\otimes
I_{n}]x=0\}$, which is no other than the synchronization subspace.
\end{proof}
\\

When the graph $\G$ is complete, $\Gamma$ equals $J$ and
$\lambda_{2}(\Gamma)=1$. Then the
condition~\eqref{eqn:epsilonsigma} is satisfied automatically
thanks to \eqref{eqn:commonP}. Hence the result below.

\begin{corollary}\label{cor:complete}
Consider the systems~\eqref{eqn:CTsystem} under
Assumption~\ref{assume:CTunstable}. Let $G_{ij}:=\alpha
P^{-1}C_{ij}^{T}$ where $P$ satisfies \eqref{eqn:commonP} and
$\alpha>0$. Then under the controls~\eqref{eqn:CTcontrol} the
systems synchronize for $\alpha$ large enough and $\G$ complete.
\end{corollary}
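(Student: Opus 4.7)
The plan is to reduce the corollary to a direct application of Theorem~\ref{thm:CTunstable} by showing that completeness of $\G$ makes the synchronization inequality~\eqref{eqn:epsilonsigma} automatic. First I would compute $\Gamma$ for the complete graph: every vertex has degree $d_i = q-1$, so $\gamma_{ii} = (q-1)/q$ and $\gamma_{ij} = -1/q$ for $i\neq j$, which is exactly $\Gamma = I_q - q^{-1}\one\one^T = J$. Since $J^2=J$, the only nonzero eigenvalue of $J$ is $1$ with multiplicity $q-1$, whence $\lambda_2(\Gamma) = 1$ and the threshold $(\lambda_2(\Gamma)^{-1}-1)\sigma$ on the right-hand side of \eqref{eqn:epsilonsigma} collapses to zero.

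Next I would verify that $\varepsilon > 0$ under Assumption~\ref{assume:CTunstable}. The strict inequality~\eqref{eqn:commonP} guarantees $C_{ij}^{T}C_{ij} - A^{T}P - PA > 0$ whenever $C_{ij}\neq 0$, so each $\lambda_{\min}(C_{ij}^{T}C_{ij} - A^{T}P - PA) > 0$. Taking the minimum over the finite collection of index pairs with $C_{ij}\neq 0$ preserves strict positivity, hence $\varepsilon > 0$. Combined with the previous paragraph, \eqref{eqn:epsilonsigma} reduces to $\varepsilon > 0$, which is satisfied automatically.

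Having secured both facts, I would simply invoke Theorem~\ref{thm:CTunstable}: for any $\alpha \geq (2q)^{-1}$ — which I read as the meaning of ``$\alpha$ large enough'' in the corollary's statement — the feedback $G_{ij} = \alpha P^{-1}C_{ij}^T$ drives the systems~\eqref{eqn:CTsystem} to synchronization. No genuine obstacle presents itself; the result is a bookkeeping consequence of the identity $\Gamma = J$ for the complete graph, together with the strict inequality inherent in the CL-detectability hypothesis, so the one-line proof ``apply Theorem~\ref{thm:CTunstable}'' is adequate.
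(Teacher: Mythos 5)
Your proof is correct and follows exactly the paper's own route: the paper likewise observes that completeness gives $\Gamma=J$ and $\lambda_{2}(\Gamma)=1$, so that the threshold in \eqref{eqn:epsilonsigma} vanishes while $\varepsilon>0$ follows from the strict inequality in \eqref{eqn:commonP}, and then invokes Theorem~\ref{thm:CTunstable} with any $\alpha\geq(2q)^{-1}$ qualifying as ``large enough.'' Nothing is missing; your write-up merely makes the bookkeeping explicit.
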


Note that any $\alpha\geq(2q)^{-1}$ is large enough by
Theorem~\ref{thm:CTunstable}.


\section{Synchronization under neutral stability}\label{sec:NS}

In the previous section we established synchronization of the
systems~\eqref{eqn:CTsystem} under the CL-detectability
condition~\eqref{eqn:commonP}. In this section we show that this
condition can be relaxed if the uncoupled dynamics harbor only
bounded solutions. To this end, we make the assumption below.

\begin{assumption}\label{assume:CTstable}
The following conditions hold on the systems~\eqref{eqn:CTsystem}.
\begin{enumerate}
\item $C_{ij}=C_{ji}$ for all $i,\,j$. \item $\G$ is connected.
\item $A$ is neutrally stable.\footnote{In the continuous-time
sense. That is, $A$ has no eigenvalue on the open right half-plane
and for each eigenvalue on the imaginary axis the corresponding
Jordan block is one-by-one.} \item The pair $(C_{ij},\,A)$ is
detectable\footnote{In the continuous-time sense. That is, no
eigenvector of $A$ with eigenvalue on the closed right half-plane
belongs to the null space of $C_{ij}$.} for all $C_{ij}\neq 0$.
\end{enumerate}
\end{assumption}

In Section~\ref{sec:motivation} we observed that (after an
appropriate coordinate change) both the mass-spring
systems~\eqref{eqn:mass-spring-skew} and the LC
oscillators~\eqref{eqn:LC-oscillator-skew} were represented by the
highly-structured array dynamics ${\dot \xi}=([I_{q}\otimes
S]-L)\xi$, where $S$ was skew-symmetric and the matrix-weighted
Laplacian $L$ was symmetric. This special righthand side, though
it seems to pertain only to a narrow class of phenomena, in fact
yields readily to generalization. For this reason we study it in
the next lemma, which we will later extend to the main theorem of
this section.

\begin{lemma}\label{lem:CTstable}
Consider the group of systems
\begin{eqnarray}\label{eqn:CTnominal}
{\dot
\xi}_{i}&=&S\xi_{i}+\sum_{j=1}^{q}H_{ij}^{T}H_{ij}(\xi_{j}-\xi_{i})\,,\qquad
i=1,\,2,\,\ldots,\,q
\end{eqnarray}
where $S\in\Real^{n\times n}$ and $H_{ij}\in\Real^{m_{ij}\times
n}$ with $H_{ii}=0$. Let $\setH$ be the graph associated to the
set $\{H_{ij}\}$. Assume that the following hold on the pair
$(S,\,\{H_{ij}\})$.
\begin{itemize}
\item[(C1)] $H_{ij}=H_{ji}$ for all $i,\,j$.  \item[(C2)] $\setH$
is connected. \item[(C3)] $S$ is skew-symmetric. \item[(C4)] The
pair $(H_{ij},\,S)$ is observable for all $H_{ij}\neq 0$.
\end{itemize}
Then the systems synchronize. Moreover, the solutions $\xi_{i}(t)$
remain bounded.
\end{lemma}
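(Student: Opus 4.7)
The plan is to use a standard energy-plus-LaSalle argument, exploiting the key structural features: $S$ is skew-symmetric (so the ``internal'' dynamics are conservative) and the coupling term is a symmetric positive semidefinite matrix-weighted Laplacian (so the coupling only dissipates energy).

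First I would stack the states into $\xi=[\xi_{1}^{T}\ \cdots\ \xi_{q}^{T}]^{T}$ and write the array as $\dot\xi = ([I_{q}\otimes S]-L)\xi$, with $L$ the matrix-weighted Laplacian whose off-diagonal blocks are $-Q_{ij}=-H_{ij}^{T}H_{ij}$. Symmetry of $L$ follows from $H_{ij}=H_{ji}$, and $L\succeq 0$ since $\xi^{T}L\xi=\sum_{j>i}\|H_{ij}(\xi_{j}-\xi_{i})\|^{2}$, exactly as in the proof of Theorem~\ref{thm:CTunstable}. I would then take the Lyapunov candidate $V(\xi):=\xi^{T}\xi=\sum_{i}\|\xi_{i}\|^{2}$. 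Because $S+S^{T}=0$ gives $\xi^{T}[I_{q}\otimes S]\xi=0$, we get $\dot V=-2\xi^{T}L\xi\leq 0$. This immediately yields boundedness: $\|\xi(t)\|\leq \|\xi(0)\|$ for all $t\geq 0$.

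Next I would invoke LaSalle's invariance principle. Since sublevel sets of $V$ are compact and forward invariant, every solution approaches the largest invariant set $\M$ contained in $\{\xi:\dot V=0\}=\{\xi:H_{ij}(\xi_{j}-\xi_{i})=0\ \text{for all edges}\ (v_{j},v_{i})\in\setE\}$. The crux is to identify $\M$ as the synchronization subspace. On $\{\dot V=0\}$, every coupling term $H_{ij}^{T}H_{ij}(\xi_{j}-\xi_{i})$ vanishes, so the dynamics reduce to $\dot\xi_{i}=S\xi_{i}$, giving $\xi_{i}(t)=e^{St}\xi_{i}(0)$. Invariance therefore requires
\begin{eqnarray*}
H_{ij}e^{St}(\xi_{j}(0)-\xi_{i}(0))=0\quad\mbox{for all}\ t\geq 0\ \mbox{and all}\ H_{ij}\neq 0.
\end{eqnarray*}
By observability of $(H_{ij},S)$ (condition~(C4)), this forces $\xi_{j}(0)=\xi_{i}(0)$ whenever $(v_{j},v_{i})\in\setE$. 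Connectivity of $\setH$ (condition~(C2)) then propagates this equality across the whole graph, so $\xi_{1}(0)=\cdots=\xi_{q}(0)$, i.e.\ $\M$ is exactly the synchronization subspace. LaSalle thus yields $\|\xi_{i}(t)-\xi_{j}(t)\|\to 0$ for all $i,j$.

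The main obstacle is the invariance-plus-observability step: one must be careful that on the set $\{\dot V=0\}$ the closed-loop trajectory genuinely obeys $\dot\xi_{i}=S\xi_{i}$ (not just the tangent projection) so that $\xi_{j}(t)-\xi_{i}(t)=e^{St}(\xi_{j}(0)-\xi_{i}(0))$, and then feed this into the observability Lyapunov-matrix condition. Once this is spelled out, connectivity of $\setH$ closes the argument. Boundedness comes for free from the monotonicity of $V$.
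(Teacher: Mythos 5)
Your proposal is correct and follows essentially the same route as the paper's proof: the quadratic Lyapunov function $\|\xi\|^{2}$ whose derivative is $-2\xi^{T}L\xi$ by skew-symmetry of $S$, LaSalle's invariance principle, reduction of the dynamics to $\dot\xi_{i}=S\xi_{i}$ on the invariant set, observability of $(H_{ij},S)$ to force $\xi_{i}=\xi_{j}$ across edges, and connectivity to propagate equality. No gaps.
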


\begin{proof}
Letting $\xi=[\xi_{1}^{T}\ \xi_{2}^{T}\ \cdots\ \xi_{q}^{T}]^{T}$
and $Q_{ij}:=H_{ij}^{T}H_{ij}$ we can rewrite
\eqref{eqn:CTnominal} as
\begin{eqnarray*}
\dot\xi=([I_{q}\otimes S]-L)\xi
\end{eqnarray*}
where $L$ is the matrix-weighted Laplacian~\eqref{eqn:genLap}.
Since $H_{ij}=H_{ji}$ the matrix $L$ is symmetric. It is also
positive semidefinite because we can write
\begin{eqnarray*}
x^{T}Lx=\sum_{j>i}(x_{j}-x_{i})^{T}Q_{ij}(x_{j}-x_{i})=\sum_{j>i}\|H_{ij}(x_{j}-x_{i})\|^{2}\,.
\end{eqnarray*}
Thanks to the skew-symmetry of $S$ we have
\begin{eqnarray*}
([I_{q}\otimes S]-L)^{T}+([I_{q}\otimes S]-L)=[I_{q}\otimes
(S+S^{T})]-(L+L^{T})=-2L\,.
\end{eqnarray*}
Thus for the Lyapunov function
$V(\xi)=2^{-1}\xi^{T}\xi=2^{-1}\|\xi\|^{2}$ we can write
\begin{eqnarray*}
\frac{d}{dt}V(\xi(t))&=&-\xi(t)^{T}L\xi(t)\,.
\end{eqnarray*}
Since $L$ is positive semidefinite the solution $\xi(t)$ has to be
bounded. (Hence the boundedness of the solutions $\xi_{i}(t)$.) In
particular, by LaSalle's invariance principle, $\xi(t)$ should
converge to the largest invariant set within the the intersection
$\{\xi:\xi^{T}\xi\leq
\|\xi(0)\|^{2}\}\cap\{\xi:\xi^{T}L\xi=0\}=:\M\subset\Real^{qn}$.
To complete the proof therefore it should suffice to show that in
this largest invariant set we have $\xi_{i}=\xi_{j}$ for all
$i$,\,$j$.

Now let $\xi(t)$ be a solution that belongs identically to $\M$.
Suppose there exist indices $i_{1},\,i_{p}$ such that
\begin{eqnarray}\label{eqn:CTcontradiction}
\xi_{i_{1}}(t)\neq\xi_{i_{p}}(t)
\end{eqnarray}
for some $t\geq 0$. Since $\xi(t)$ belongs identically to $\M$ we
have $L\xi(t)\equiv 0$. In other words
\begin{eqnarray}\label{eqn:CTobs1}
H_{ij}(\xi_{j}(t)-\xi_{i}(t))\equiv 0
\end{eqnarray}
for all $i,\,j$. Then \eqref{eqn:CTnominal} is reduced to
\begin{eqnarray}\label{eqn:CTobs2}
{\dot\xi}_{i}=S\xi_{i}(t)
\end{eqnarray}
for all $i$. By \eqref{eqn:CTobs1}, \eqref{eqn:CTobs2}, and the
observability of the pairs $(H_{ij},\,S)$ (for $H_{ij}\neq 0$) we
can therefore write $\xi_{i}(t)\equiv\xi_{j}(t)$ for all
$H_{ij}\neq 0$. Since the graph $\setH$ is connected we can find
indices $i_{2},\,i_{3},\,\ldots,\,i_{p-1}$ such that
$H_{i_{\ell}i_{\ell+1}}\neq 0$ for $\ell=1,\,2,\,\ldots,\,p-1$.
Then we have $\xi_{i_{\ell}}(t)\equiv\xi_{i_{\ell+1}}(t)$ for
$\ell=1,\,2,\,\ldots,\,p-1$, which implies
$\xi_{i_{1}}(t)\equiv\xi_{i_{p}}(t)$. This contradicts
\eqref{eqn:CTcontradiction}.
\end{proof}
\\

A pleasant pair of byproducts of Lemma~\ref{lem:CTstable} are the
following twin corollaries on the mechanical and electrical arrays
studied in Section~\ref{sec:motivation}.

\begin{corollary}
Consider the coupled mass-spring systems~\eqref{eqn:mass-spring}.
Let the graph associated to the set $\{B_{ij}\}$ be connected and
the pairs $(M^{-1}B_{ij},\,M^{-1}K)$ be observable for all
$B_{ij}\neq 0$. Then the systems synchronize.
\end{corollary}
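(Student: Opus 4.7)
The plan is to invoke Lemma~\ref{lem:CTstable} on the transformed dynamics \eqref{eqn:mass-spring-skew}. Since the coordinate change $\xi_i = \mathrm{diag}(K^{1/2},M^{1/2})x_i$ is invertible, synchronization in the $\xi$-coordinates immediately yields synchronization in the $x$-coordinates, and skew-symmetry of $S$ is already noted in the paper. What remains is to exhibit factorizations $Q_{ij} = H_{ij}^T H_{ij}$ satisfying conditions (C1), (C2), and (C4) of the lemma.

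For the factorization, each $B_{ij}$ is a diagonal matrix with nonnegative entries, hence admits a diagonal square root $B_{ij}^{1/2}$. Setting
\begin{eqnarray*}
H_{ij} := \begin{bmatrix} 0 & B_{ij}^{1/2} M^{-1/2}\end{bmatrix}
\end{eqnarray*}
produces $H_{ij}^T H_{ij} = Q_{ij}$. Conditions (C1) and (C2) are then immediate: $H_{ij} = H_{ji}$ follows from $B_{ij} = B_{ji}$, and $H_{ij} = 0 \iff B_{ij} = 0$ makes the graph of $\{H_{ij}\}$ identical to that of $\{B_{ij}\}$, hence connected.

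The substantive step, and the only one I expect to require real work, is (C4): observability of $(H_{ij}, S)$ whenever $B_{ij} \neq 0$. I would verify this via the PBH test. Given $v = [v_1^T\; v_2^T]^T$ with $Sv = \lambda v$ and $H_{ij}v = 0$, the two block-rows of the eigenvalue relation give $K^{1/2} M^{-1/2} v_2 = \lambda v_1$ and $-M^{-1/2} K^{1/2} v_1 = \lambda v_2$, so $w := M^{-1/2} v_2$ satisfies $M^{-1} K w = -\lambda^2 w$; meanwhile $H_{ij}v = 0$ reads $B_{ij}^{1/2} w = 0$, i.e. $w \in \ker B_{ij} = \ker(M^{-1}B_{ij})$. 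For $\lambda \neq 0$ the assumed observability of $(M^{-1}B_{ij}, M^{-1}K)$ forces $w = 0$, whence $v_2 = 0$ and then $v_1 = \lambda^{-1} K^{1/2} M^{-1/2} v_2 = 0$. For $\lambda = 0$ the two eigenvalue equations force $v_1 = v_2 = 0$ directly, by invertibility of $K^{1/2}$ and $M^{\pm 1/2}$. With all of (C1)--(C4) in place, Lemma~\ref{lem:CTstable} delivers the conclusion.
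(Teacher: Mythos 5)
Your proposal is correct and follows exactly the route the paper intends: the paper states this corollary as an immediate byproduct of Lemma~\ref{lem:CTstable} applied to the transformed dynamics~\eqref{eqn:mass-spring-skew} and omits the details. Your factorization $H_{ij}=[\,0\ \ B_{ij}^{1/2}M^{-1/2}\,]$ and the PBH verification that observability of $(M^{-1}B_{ij},\,M^{-1}K)$ implies observability of $(H_{ij},\,S)$ correctly supply the one nontrivial step the paper leaves implicit.
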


\begin{corollary}
Consider the coupled LC oscillators~\eqref{eqn:LC-oscillator}. Let
the graph associated to the set $\{G_{ij}\}$ be connected and the
pairs $(C^{-1}G_{ij},\,C^{-1}L^{-1})$ be observable for all
$G_{ij}\neq 0$. Then the oscillators synchronize.
\end{corollary}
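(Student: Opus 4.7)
The plan is to reduce the statement to an application of Lemma~\ref{lem:CTstable} in the $\xi$-coordinates introduced in Section~\ref{sec:motivation}. I would first invoke the coordinate change $\xi_i := T x_i$ with $T := \mathrm{diag}(L^{-1/2},\, C^{1/2})$ that was already verified in Section~\ref{sec:motivation} to rewrite \eqref{eqn:LC-oscillator} in the form \eqref{eqn:LC-oscillator-skew}, with $S$ skew-symmetric and $Q_{ij}$ of the block form recorded there. To match the template of Lemma~\ref{lem:CTstable} I would factor $Q_{ij} = H_{ij}^T H_{ij}$ through the natural choice
\begin{equation*}
H_{ij} := \begin{bmatrix} 0 & G_{ij}^{1/2} C^{-1/2} \end{bmatrix},
\end{equation*}
which is well-defined since $G_{ij}$ is diagonal with nonnegative entries. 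Conditions (C1)--(C3) of the lemma are then immediate: $H_{ij} = H_{ji}$ follows from $G_{ij} = G_{ji}$; the graph $\setH$ associated to $\{H_{ij}\}$ coincides with the graph of $\{G_{ij}\}$ (since $H_{ij}\neq 0$ iff $G_{ij}\neq 0$) and hence is connected; and skew-symmetry of $S$ is already recorded in Section~\ref{sec:motivation}.

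The substantive step is to verify observability (C4) of $(H_{ij}, S)$ starting from the assumed observability of $(C^{-1} G_{ij},\, C^{-1} L^{-1})$. My approach would exploit that observability is preserved under state-space similarity, so $(H_{ij}, S)$ is observable if and only if $(H_{ij} T,\, T^{-1} S T)$ is; a direct calculation yields $H_{ij} T = [\,0\;\;G_{ij}^{1/2}\,]$ and $T^{-1} S T = A_0$, where
\begin{equation*}
A_0 := \begin{bmatrix} 0 & I_p \\ -C^{-1} L^{-1} & 0 \end{bmatrix}
\end{equation*}
is the open-loop matrix of \eqref{eqn:LC-oscillator}. I would then apply the PBH test: any eigenvector $[v_1^T\; v_2^T]^T$ of $A_0$ with eigenvalue $\lambda$ lying in the kernel of $H_{ij} T$ must satisfy $v_2 = \lambda v_1$, $C^{-1} L^{-1} v_1 = -\lambda^2 v_1$, and $G_{ij}^{1/2} v_2 = 0$. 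For $\lambda = 0$, invertibility of $C^{-1} L^{-1}$ forces $v_1 = 0$; for $\lambda \neq 0$, the relation $G_{ij}^{1/2} v_1 = 0$ together with $G_{ij} \geq 0$ yields $C^{-1} G_{ij} v_1 = 0$, exhibiting an eigenvector of $C^{-1} L^{-1}$ in the kernel of $C^{-1} G_{ij}$, which contradicts the hypothesis.

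With (C1)--(C4) in place, Lemma~\ref{lem:CTstable} yields $\|\xi_i(t) - \xi_j(t)\| \to 0$ for all $i,\,j$, and invertibility of $T$ transfers this bound to $\|x_i(t) - x_j(t)\| \to 0$. The main obstacle is the PBH calculation bridging the two observability statements; everything else is a direct translation through the coordinate change already established in Section~\ref{sec:motivation}, and in fact the argument runs in perfect parallel with its mass-spring sibling above.
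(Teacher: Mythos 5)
Your proposal is correct and follows exactly the route the paper intends: the corollary is stated as an immediate byproduct of Lemma~\ref{lem:CTstable} via the coordinate change already carried out in Section~\ref{sec:motivation}, and your factorization $Q_{ij}=H_{ij}^{T}H_{ij}$ together with the PBH similarity argument simply makes explicit the observability bridge that the paper leaves implicit. No gaps.
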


In order to extend Lemma~\ref{lem:CTstable} to a general result we
will need the following fact. (Most readers shall find the
statement obvious. Still, for the sake of completeness, a
demonstration is provided.)

\begin{lemma}\label{lem:CTnominal}
Let $A\in\Real^{n\times n}$ be neutrally stable and the signal
$w:\Real_{\geq 0}\to\Real^{n}$ satisfy $\|w(t)\|\leq ce^{-\alpha
t}$ for some constants $c,\,\alpha>0$. Then for each solution
$x(t)$ of the system ${\dot x}(t)=Ax(t)+w(t)$ there exists
$v\in\Real^{n}$ such that $\|x(t)-e^{At}v\|\to 0$ as $t\to\infty$.
\end{lemma}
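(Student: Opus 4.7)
My plan is to split $\Real^n$ along $A$-invariant subspaces supplied by the neutral-stability hypothesis. Write $\Real^n = V_c \oplus V_s$, where $A_c := A|_{V_c}$ has purely imaginary, semisimple eigenvalues (this is exactly what neutral stability buys) and $A_s := A|_{V_s}$ is Hurwitz; let $\pi_c, \pi_s$ be the associated projections. Decomposing $x(t) = x_c(t) + x_s(t)$ and $w(t) = w_c(t) + w_s(t)$ accordingly, one obtains two independent forced equations $\dot{x}_c = A_c x_c + w_c$ and $\dot{x}_s = A_s x_s + w_s$. I would exhibit $v_c \in V_c$ and $v_s \in V_s$ separately with $\|x_c(t) - e^{A_c t}v_c\|\to 0$ and $\|x_s(t) - e^{A_s t}v_s\|\to 0$, and then take $v := v_c + v_s$, using the triangle inequality and $A$-invariance of the decomposition to conclude.

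The stable component is immediate: since $A_s$ is Hurwitz, both $\|e^{A_s t}\|$ and $\|w_s(t)\|$ decay exponentially, so a standard convolution estimate on the variation-of-constants formula shows $\|x_s(t)\|\to 0$, and the trivial choice $v_s := 0$ suffices. The content of the lemma lives on $V_c$. Because $A_c$ is semisimple with purely imaginary spectrum, $\|e^{A_c t}\|$ is uniformly bounded by some constant $N$ for \emph{all} $t\in\Real$, forward and backward in time. This is the crucial fact: it makes $\|e^{-A_c s}w_c(s)\|\le Nce^{-\alpha s}$ integrable on $[0,\infty)$, so I can set
\begin{eqnarray*}
v_c := x_c(0) + \int_0^\infty e^{-A_c s}\,w_c(s)\,ds.
\end{eqnarray*}
Variation of constants rewrites $x_c(t) = e^{A_c t}\bigl(x_c(0)+\int_0^t e^{-A_c s}w_c(s)\,ds\bigr)$, whence $x_c(t) - e^{A_c t}v_c = -e^{A_c t}\int_t^\infty e^{-A_c s}w_c(s)\,ds$, and this residual has norm at most $N^{2}c\alpha^{-1}e^{-\alpha t}$, which vanishes.

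The only conceptual obstacle is securing the uniform two-sided bound on $\|e^{A_c t}\|$, and this is precisely where neutral stability (as opposed to mere Lyapunov stability) is indispensable: nontrivial Jordan blocks on the imaginary axis would leave $e^{-A_c s}$ growing polynomially and the clean exponential estimate above would have to be replaced by a weighted one. With both components in hand, setting $v := v_c + v_s$ yields $\|x(t)-e^{At}v\|\le\|x_c(t)-e^{A_c t}v_c\|+\|x_s(t)-e^{A_s t}v_s\|\to 0$, as required.
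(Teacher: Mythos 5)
Your proposal is correct and follows essentially the same route as the paper: both split the dynamics into a stable part (where the trivial choice $v_s=0$ works) and a marginally stable part, and on the latter define $v$ through the convergent improper integral $\int_0^\infty e^{-A_c s}w_c(s)\,ds$, using the two-sided uniform bound on $\|e^{A_c t}\|$ that neutral stability provides. The only cosmetic difference is that the paper realizes this bound by explicitly transforming the center block to a skew-symmetric matrix $S$ (so that $e^{St}$ is orthogonal), whereas you invoke semisimplicity of the purely imaginary spectrum directly.
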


\begin{proof}
If $A$ is stable (i.e., all its eigenvalues are in the open left
half-plane) then we can choose $v=0$. Otherwise let
$T\in\Real^{n\times n}$ be a transformation matrix such that
\begin{eqnarray*}
T^{-1}AT=\left[\begin{array}{cc}S&0\\0&F\end{array}\right]=:{\tilde
A}
\end{eqnarray*}
where $S\in\Real^{n_{1}\times n_{1}}$ is skew-symmetric and
$F\in\Real^{n_{2}\times n_{2}}$ stable. Apply the coordinate
change $z=[z_{1}^{T}\ z_{2}^{T}]^{T}=T^{-1}x$ with
$z_{1}\in\Real^{n_{1}}$ and $z_{2}\in\Real^{n_{2}}$ and let
$[{\tilde w}_{1}(t)^{T}\ {\tilde w}_{2}(t)^{T}]^{T}=T^{-1}w(t)$
with ${\tilde w}_{1}(t)\in\Real^{n_{1}}$ and ${\tilde
w}_{2}(t)\in\Real^{n_{2}}$. Then we can write
\begin{eqnarray*}
{\dot z}_{1}(t)&=&Sz_{1}(t)+{\tilde w}_{1}(t)\\
{\dot z}_{2}(t)&=&Fz_{2}(t)+{\tilde w}_{2}(t)
\end{eqnarray*}
which yield
\begin{eqnarray*}
z_{1}(t)&=&e^{St}z_{1}(0)+\int_{0}^{t}e^{S(t-\tau)}{\tilde w}_{1}(\tau)d\tau\\
z_{2}(t)&=&e^{Ft}z_{2}(0)+\int_{0}^{t}e^{F(t-\tau)}{\tilde
w}_{2}(\tau)d\tau\,.
\end{eqnarray*}
Note that
\begin{eqnarray}\label{eqn:z2decay}
\lim_{t\to\infty}z_{2}(t)=0
\end{eqnarray}
because $F$ is stable and ${\tilde w}_{2}(t)$ is exponentially
decaying. Let
\begin{eqnarray*}
a:=\int_{0}^{\infty}e^{-S\tau}{\tilde w}_{1}(\tau)d\tau
\end{eqnarray*}
which is well defined because $e^{St}$ is orthogonal (therefore
bounded) and ${\tilde w}_{1}(t)$ is exponentially decaying. In
particular, we have
\begin{eqnarray}\label{eqn:intdecay}
\lim_{t\to\infty}\int_{t}^{\infty}e^{S(t-\tau)}{\tilde
w}_{1}(\tau)d\tau=0\,.
\end{eqnarray}
Finally the below choice
\begin{eqnarray*}
v:=T\left[\begin{array}{c}z_{1}(0)+a\\0\end{array}\right]
\end{eqnarray*}
should work. To see that we write
\begin{eqnarray*}
\|x(t)-e^{At}v\|&=&\|Tz(t)-Te^{{\tilde
A}t}T^{-1}v\|\\
&\leq&\|T\|\left(\|z_{1}(t)-e^{St}(z_{1}(0)+a)\|^{2}+\|z_{2}(t)\|^{2}\right)^{1/2}\\
&=&\|T\|\left(\left\|\int_{t}^{\infty}e^{S(t-\tau)}{\tilde
w}_{1}(\tau)d\tau\right\|^{2}+\|z_{2}(t)\|^{2}\right)^{1/2}\,.
\end{eqnarray*}
The result follows by \eqref{eqn:z2decay} and
\eqref{eqn:intdecay}.
\end{proof}
\\

The below algorithm is where we construct the gains $G_{ij}$ that
ensure synchronization under Assumption~\ref{assume:CTstable}. The
algorithm is followed by the main result of this section.

\begin{algorithm}\label{alg:CTone}
Given $A\in\Real^{n\times n}$ that is neutrally stable and the set
of matrices $\{C_{ij}\}$ with $C_{ij}\in\Real^{m_{ij}\times n}$,
obtain the set $\{G_{ij}\}$ with $G_{ij}\in\Real^{n\times m_{ij}}$
as follows. Let $n_{1}\leq n$ be the number of eigenvalues of $A$
on the imaginary axis and $n_{2}:=n-n_{1}$. If $n_{1}=0$ let
$G_{ij}:=0$. Otherwise, first choose $U\in\Real^{n\times n_{1}}$
and $W\in\Real^{n\times n_{2}}$ satisfying
\begin{eqnarray*}
[U\ \ W]^{-1}A[U\ \
W]=\left[\begin{array}{cc}S&0\\0&F\end{array}\right]
\end{eqnarray*}
where $S\in\Real^{n_{1}\times n_{1}}$ is skew-symmetric and
$F\in\Real^{n_{2}\times n_{2}}$ stable. Then let
$G_{ij}:=UU^{T}C_{ij}^{T}$.
\end{algorithm}

\begin{theorem}\label{thm:CTstable}
Consider the systems~\eqref{eqn:CTsystem} under
Assumption~\ref{assume:CTstable}. Let the gains $G_{ij}$ be
constructed according to Algorithm~\ref{alg:CTone}. Then under the
controls~\eqref{eqn:CTcontrol} the systems synchronize. Moreover,
the solutions $x_{i}(t)$ remain bounded.
\end{theorem}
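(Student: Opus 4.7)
The plan is to decouple the neutrally stable and asymptotically stable parts of $A$. I would apply the coordinate change $z_i := T^{-1}x_i$ with $T := [U\ \ W]$, splitting $z_i = [z_{1,i}^{T},\ z_{2,i}^{T}]^{T}$ into components $z_{1,i}\in\Real^{n_{1}}$ and $z_{2,i}\in\Real^{n_{2}}$. Since $T^{-1}U$ equals the first $n_{1}$ columns of $I_{n}$, a direct computation shows that under \eqref{eqn:CTcontrol} the closed loop takes the form
\begin{eqnarray*}
{\dot z}_{1,i} &=& S z_{1,i}+\sum_{j=1}^{q}H_{ij}^{T}H_{ij}(z_{1,j}-z_{1,i})+w_{i}(t),\\
{\dot z}_{2,i} &=& F z_{2,i},
\end{eqnarray*}
where $H_{ij}:=C_{ij}U$, $K_{ij}:=C_{ij}W$, and $w_{i}(t):=\sum_{j=1}^{q}H_{ij}^{T}K_{ij}(z_{2,j}(t)-z_{2,i}(t))$. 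Because $F$ is Hurwitz, each $z_{2,i}$ and hence each $w_{i}$ decays exponentially to zero.

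Next I would verify that the \emph{nominal} system (obtained by erasing the $w_{i}$'s) satisfies the hypotheses of Lemma~\ref{lem:CTstable} under the pair $(S,\{H_{ij}\})$. Conditions (C1) and (C3) are immediate from $C_{ij}=C_{ji}$ and Algorithm~\ref{alg:CTone}. For (C4), note that $AU=US$, so every (complex) eigenvector $w$ of $S$ lifts to an imaginary-axis eigenvector $Uw$ of $A$; detectability of $(C_{ij},A)$ for $C_{ij}\neq 0$ therefore yields $H_{ij}w=C_{ij}Uw\neq 0$, giving observability of $(H_{ij},S)$. In particular $H_{ij}\neq 0$ whenever $C_{ij}\neq 0$, so the graph $\setH$ coincides with $\G$ and (C2) follows.

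The main obstacle is absorbing the nonzero perturbation $w_{i}$. Stacking $z_{1}:=[z_{1,1}^{T}\ \cdots\ z_{1,q}^{T}]^{T}$ and $W(t):=[w_{1}(t)^{T}\ \cdots\ w_{q}(t)^{T}]^{T}$, the stacked equation reads ${\dot z}_{1}=\mathcal{A}z_{1}+W(t)$ with $\mathcal{A}:=[I_{q}\otimes S]-L_{H}$, where $L_{H}$ is the matrix-weighted Laplacian built from $\{H_{ij}^{T}H_{ij}\}$. The subspace $\mathcal{D}:=\{\one\otimes\eta:\eta\in\Real^{n_{1}}\}$ and its orthogonal complement $\mathcal{D}^{\perp}$ are both $\mathcal{A}$-invariant (using $L_{H}[\one\otimes I_{n_{1}}]=0$), with $\mathcal{A}|_{\mathcal{D}}$ similar to the skew-symmetric $S$; meanwhile Lemma~\ref{lem:CTstable} applied to the nominal array drives every solution in $\mathcal{D}^{\perp}$ to zero, so by linearity $\mathcal{A}|_{\mathcal{D}^{\perp}}$ must be Hurwitz. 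Consequently $\mathcal{A}$ is neutrally stable, and Lemma~\ref{lem:CTnominal} produces some $v\in\Real^{qn_{1}}$ with $\|z_{1}(t)-e^{\mathcal{A}t}v\|\to 0$; the $\mathcal{D}^{\perp}$-summand of $e^{\mathcal{A}t}v$ decays exponentially, leaving $z_{1,i}(t)\to e^{St}\eta$ (for some $\eta\in\Real^{n_{1}}$) uniformly in $i$.

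Combined with $z_{2,i}(t)\to 0$, this yields $x_{i}(t)-x_{j}(t)=T(z_{i}(t)-z_{j}(t))\to 0$, proving synchronization; boundedness follows because $e^{St}\eta$ is bounded (orbit of a skew-symmetric flow) and the $z_{2,i}$ decay to zero. The degenerate case $n_{1}=0$ needs no separate treatment: then $A$ is Hurwitz, Algorithm~\ref{alg:CTone} sets $G_{ij}=0$, and each $x_{i}$ decays to the origin outright.
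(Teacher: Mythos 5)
Your proof is correct and follows essentially the same route as the paper's: the same $[U\ \ W]$ coordinate change into $(\xi,\eta)$, the same verification of (C1)--(C4) to invoke Lemma~\ref{lem:CTstable} on the nominal array, and the same use of Lemma~\ref{lem:CTnominal} to absorb the exponentially decaying cross-coupling term. The only (immaterial) difference is that you establish neutral stability of $[I_{q}\otimes S]-L$ via an explicit invariant-subspace splitting into the agreement subspace and its orthogonal complement, whereas the paper infers it directly from the boundedness of the nominal solutions guaranteed by Lemma~\ref{lem:CTstable}.
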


\begin{proof}
For $n_{1}=0$ the matrix $A$ is stable and the result follows
trivially. Let us hence consider the $n_{1}\geq 1$ case. Under the
suggested controls, dynamics of the systems~\eqref{eqn:CTsystem}
become
\begin{eqnarray}\label{eqn:CTclosedloop}
{\dot
x}_{i}=Ax_{i}+\sum_{j=1}^{q}UU^{T}C_{ij}^{T}C_{ij}(x_{j}-x_{i})\,,\qquad
i=1,\,2,\,\ldots,\,q\,.
\end{eqnarray}
The fist step of the proof is to mold \eqref{eqn:CTclosedloop}
into something we have already studied. To this end, let
$U^{\dagger}\in\Real^{n_{1}\times n}$ and
$W^{\dagger}\in\Real^{n_{2}\times n}$ be such that
\begin{eqnarray*}
\left[\begin{array}{c}U^{\dagger}\\
W^{\dagger}\end{array}\right]=[U\ \ W]^{-1}\,.
\end{eqnarray*}
Then define $\xi_{i}\in\Real^{n_{1}}$ and
$\eta_{i}\in\Real^{n_{2}}$ through the following change of
coordinates
\begin{eqnarray*}
\left[\begin{array}{c}\xi_{i}\\
\eta_{i}\end{array}\right]=\left[\begin{array}{c}U^{\dagger}\\
W^{\dagger}\end{array}\right]x_{i}\,.
\end{eqnarray*}
Now, by letting $H_{ij}:=C_{ij}U$, we can transform
\eqref{eqn:CTclosedloop} into
\begin{subeqnarray}\label{eqn:CTgomlek}
{\dot
\xi}_{i}&=&S\xi_{i}+\sum_{j=1}^{q}H_{ij}^{T}H_{ij}(\xi_{j}-\xi_{i})+\sum_{j=1}^{q}H_{ij}^{T}C_{ij}W(\eta_{j}-\eta_{i})\\
{\dot\eta}_{i}&=&F\eta_{i}
\end{subeqnarray}
thanks to the identities $U^{\dagger}U=I_{n_{1}}$ and
$W^{\dagger}U=0$. The first step is complete.

In the second step we show that the following {\em nominal
systems}
\begin{eqnarray}\label{eqn:nominalCT}
{\dot \xi}_{i}^{\rm nom}&=&S\xi_{i}^{\rm
nom}+\sum_{j=1}^{q}H_{ij}^{T}H_{ij}(\xi_{j}^{\rm nom}-\xi_{i}^{\rm
nom})
\end{eqnarray}
synchronize. This we can do by Lemma~\ref{lem:CTstable} provided
that we show that the conditions (C1)-(C4) are satisfied by the
pair $(S,\{H_{ij}\})$. We have (C1) because $C_{ij}=C_{ji}$. We
have (C3) because $S$ is skew-symmetric by
Algorithm~\ref{alg:CTone}. Let $\setH$ be the graph associated to
the set $\{H_{ij}\}$. Since $\G$ is connected, the equality
$\setH=\G$ would imply (C2). And to show $\setH=\G$ it is enough
that we establish $H_{ij}\neq 0 \Longleftrightarrow C_{ij}\neq 0$.
To this end, we make two simple observations. First,
$(H_{ij},\,S)$ is observable when $(C_{ij},\,A)$ is detectable.
Second, the observability of $(H_{ij},\,S)$ demands $H_{ij}\neq
0$. These observations, in the light of the fact that the pair
$(C_{ij},\,A)$ is detectable for all $C_{ij}\neq 0$, allow us to
construct the following chain of implications.
\begin{eqnarray*}
\begin{array}{ccc}
C_{ij}\neq 0 &\implies& (C_{ij},\,A)\ \mbox{detectable}\\
\Uparrow & \null & \Downarrow\\
H_{ij}\neq 0 & \Longleftarrow & (H_{ij},\,S)\ \mbox{observable}
\end{array}
\end{eqnarray*}
This chain gives us not only the equivalence $H_{ij}\neq 0
\Longleftrightarrow C_{ij}\neq 0$ but also the condition (C4).
This completes the second step.

We begin the last step by stacking the states $\xi=[\xi_{1}^{T}\
\xi_{2}^{T}\ \cdots\ \xi_{q}^{T}]^{T}$, $\eta=[\eta_{1}^{T}\
\eta_{2}^{T}\ \cdots\ \eta_{q}^{T}]^{T}$ and rewriting
\eqref{eqn:CTgomlek} as
\begin{eqnarray}\label{eqn:CTxisystem}
\left[\begin{array}{c}\dot\xi\\\dot\eta\end{array}\right]
=\left[\begin{array}{cc}[I_{q}\otimes S]-L&D\\0&[I_{q}\otimes
F]\end{array}\right]\left[\begin{array}{c}\xi\\\eta\end{array}\right]
\end{eqnarray}
where the structure of $D\in\Real^{qn_{1}\times qn_{2}}$ plays no
role in our analysis and $L$ is the matrix-weighted
Laplacian~\eqref{eqn:genLap} with $Q_{ij}=H_{ij}^{T}H_{ij}$. By
Lemma~\ref{lem:CTstable} the solutions of the
systems~\eqref{eqn:nominalCT} are bounded. This implies that the
block $[I_{q}\otimes S]-L$ has to be neutrally stable. Also, since
$F$ is stable by Algorithm~\ref{alg:CTone}, the block
$[I_{q}\otimes F]$ is stable. Hence the block triangular system
matrix in \eqref{eqn:CTxisystem} is neutrally stable, guaranteeing
the boundedness of the solutions of the
systems~\eqref{eqn:CTgomlek}. Consequently, the solutions
$x_{i}(t)$ of the systems~\eqref{eqn:CTclosedloop} remain bounded.
To show that all $x_{i}(t)$ converge to a common trajectory we
once again look at the system~\eqref{eqn:CTxisystem}. The solution
$\eta(t)$ and, in particular, the term $D\eta(t)$ decay
exponentially because $[I_{q}\otimes F]$ is stable. Since
$[I_{q}\otimes S]-L$ is neutrally stable,
Lemma~\ref{lem:CTnominal} applies to the dynamics
$\dot\xi(t)=([I_{q}\otimes S]-L)\xi(t)+D\eta(t)$ and allows us to
assert that there exists some $v\in\Real^{qn_{1}}$ such that
$\|\xi(t)-e^{([I_{q}\otimes S]-L)t}v\|\to 0$ as $t\to\infty$. In
other words the solutions $\xi_{i}(t)$ converge to the solutions
$\xi^{\rm nom}_{i}(t)$ of the nominal
systems~\eqref{eqn:nominalCT} with $\xi^{\rm nom}(0)=v$, i.e.,
$\|\xi_{i}(t)-\xi_{i}^{\rm nom}(t)\|\to 0$. We know by
Lemma~\ref{lem:CTstable} that the nominal solutions $\xi^{\rm
nom}_{i}(t)$ converge to a common trajectory. This allows us to
claim for the actual solutions that $\|\xi_{i}(t)-\xi_{j}(t)\|\to
0$ for all $i,\,j$. We also have $\|\eta_{i}(t)-\eta_{j}(t)\|\to
0$ since $\eta(t)\to 0$. The synchronization of the
systems~\eqref{eqn:CTclosedloop} then follows because
\begin{eqnarray*}
\|x_{i}(t)-x_{j}(t)\|&=&\|U(\xi_{i}(t)-\xi_{j}(t))+W(\eta_{i}(t)-\eta_{j}(t))\|\\
&\leq&\|U\|\cdot\|\xi_{i}(t)-\xi_{j}(t)\|+\|W\|\cdot\|\eta_{i}(t)-\eta_{j}(t)\|\,.
\end{eqnarray*}
Hence the result.
\end{proof}

\section{Discrete-time problem}

In the last two sections we have established synchronization in an
array of coupled linear systems in continuous time under different
sets of conditions. In Section~\ref{sec:UD} the key assumption for
synchronization was the CL-detectability~\eqref{eqn:commonP} and
in Section~\ref{sec:NS} it was the neutral stability of the
uncoupled dynamics. Now we ask the following question. Can
synchronization be established in discrete time under analogous
assumptions? Our answer is only partial: neutral stability
(through appropriate coupling) does indeed yield synchronization
in discrete time. As for synchronization under
CL-detectability\footnote{In the discrete-time sense. That is,
there exists a common symmetric positive definite matrix $P$
satisfying $A^{T}PA-P<C_{ij}^{T}C_{ij}$ for all $C_{ij}\neq 0$.}
all our attempts to generate the discrete-time counterpart of
Theorem~\ref{thm:CTunstable} have so far proved futile.

In this section we study the discrete-time version of the problem
that was attended to in Section~\ref{sec:NS}. The road map we
adopt is parallel to that of the continuous-time case, causing at
times some pardonable repetitions. Consider the group of
discrete-time linear systems
\begin{subeqnarray}\label{eqn:DTsystem}
x_{i}^{+}&=&Ax_{i}+u_{i}\,,\qquad i=1,\,2,\,\ldots,\,q\\
\Y_{i}&=&\{C_{i1}(x_{1}-x_{i}),\,C_{i2}(x_{2}-x_{i}),\,\ldots,\,C_{iq}(x_{q}-x_{i})\}
\end{subeqnarray}
where $x_{i}^{+}$ denotes the state of the $i$th system at the
next time instant, $A\in\Real^{n\times n}$, and
$C_{ij}\in\Real^{m_{ij}\times n}$ with $C_{ii}=0$. As before, we
let the graph $\G$ (associated to the set $\{C_{ij}\}$) represent
the network topology. Here, analogous to the continuous-time
problem, we search for a simple method for choosing the gains
$G_{ij}\in\Real^{n\times m_{ij}}$ such that under the controls
\begin{eqnarray}\label{eqn:DTcontrol}
u_{i}=\varepsilon\sum_{j=1}^{q}G_{ij}C_{ij}(x_{j}-x_{i})
\end{eqnarray}
(for $\varepsilon>0$ sufficiently small) the
systems~\eqref{eqn:DTsystem} synchronize. That is, the solutions
satisfy $\|x_{i}(k)-x_{j}(k)\|\to 0$ as $k\to\infty$
($k\in\Natural$) for all indices $i,\,j$ and all initial
conditions. We make the following assumption.

\begin{assumption}\label{assume:DTstable}
The following conditions hold on the systems~\eqref{eqn:DTsystem}.
\begin{enumerate}
\item $C_{ij}=C_{ji}$ for all $i,\,j$. \item $\G$ is connected.
\item $A$ is neutrally stable.\footnote{In the discrete-time
sense. That is, $A$ has no eigenvalue with magnitude larger than
one and for each eigenvalue on the unit circle the corresponding
Jordan block is one-by-one.} \item The pair $(C_{ij},\,A)$ is
detectable\footnote{In the discrete-time sense. That is, no
eigenvector of $A$ with eigenvalue on or outside the unit circle
belongs to the null space of $C_{ij}$.} for all $C_{ij}\neq 0$.
\end{enumerate}
\end{assumption}

As in continuous-time case, we first analyze a simpler problem
(Lemma~\ref{lem:DTstable}) which inspires a method to generate the
coupling gains $G_{ij}$. This method is then elaborated in
Algorithm~\ref{alg:DTone} and why it should work is demonstrated
in our discrete-time main result Theorem~\ref{thm:DTstable}.

\begin{lemma}\label{lem:DTstable}
Consider the group of systems
\begin{eqnarray}\label{eqn:DTnominal}
\xi^{+}_{i}&=&Q\xi_{i}+\varepsilon
Q\sum_{j=1}^{q}H_{ij}^{T}H_{ij}(\xi_{j}-\xi_{i})\,,\qquad
i=1,\,2,\,\ldots,\,q
\end{eqnarray}
where $Q\in\Real^{n\times n}$ and $H_{ij}\in\Real^{m_{ij}\times
n}$ with $H_{ii}=0$. Let $\setH$ be the graph associated to the
set $\{H_{ij}\}$. Assume that the following hold on the pair
$(Q,\,\{H_{ij}\})$.
\begin{itemize}
\item[(D1)] $H_{ij}=H_{ji}$ for all $i,\,j$.  \item[(D2)] $\setH$
is connected. \item[(D3)] $Q$ is orthogonal. \item[(D4)] The pair
$(H_{ij},\,Q)$ is observable for all $H_{ij}\neq 0$.
\end{itemize}
Let $L$ be the matrix-weighted Laplacian~\eqref{eqn:genLap} with
$Q_{ij}:=H_{ij}^{T}H_{ij}$ and $\bar\varepsilon>0$ satisfy
$L\geq\bar\varepsilon L^{2}$. Then for all
$\varepsilon\in(0,\,\bar\varepsilon]$ the systems synchronize
under the controls~\eqref{eqn:DTcontrol}. Moreover, the solutions
$\xi_{i}(k)$ remain bounded.
\end{lemma}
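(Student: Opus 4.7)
The plan is to mirror the continuous-time proof of Lemma~\ref{lem:CTstable}, replacing the integral Lyapunov analysis by a difference-equation computation and replacing the skew-symmetry of $S$ by the orthogonality of $Q$. Stacking $\xi=[\xi_{1}^{T}\ \cdots\ \xi_{q}^{T}]^{T}$ and using $Q_{ij}:=H_{ij}^{T}H_{ij}$, the array dynamics read
\begin{eqnarray*}
\xi^{+}=[I_{q}\otimes Q](I_{qn}-\varepsilon L)\xi\,,
\end{eqnarray*}
where $L$ is the matrix-weighted Laplacian~\eqref{eqn:genLap}. As in Lemma~\ref{lem:CTstable}, (D1) gives $L=L^{T}$ and the identity $\xi^{T}L\xi=\sum_{j>i}\|H_{ij}(\xi_{j}-\xi_{i})\|^{2}$ gives $L\geq 0$.

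Next I would take the candidate Lyapunov function $V(\xi)=\tfrac{1}{2}\|\xi\|^{2}$ and exploit (D3). Since $Q^{T}Q=I_{n}$, the matrix $[I_{q}\otimes Q]$ is orthogonal, so
\begin{eqnarray*}
V(\xi^{+})-V(\xi)=\tfrac{1}{2}\xi^{T}\bigl((I_{qn}-\varepsilon L)^{2}-I_{qn}\bigr)\xi=-\varepsilon\,\xi^{T}L\xi+\tfrac{\varepsilon^{2}}{2}\xi^{T}L^{2}\xi\,.
\end{eqnarray*}
The hypothesis $L\geq\bar\varepsilon L^{2}$ yields $\xi^{T}L^{2}\xi\leq\bar\varepsilon^{-1}\xi^{T}L\xi$, and combined with $\varepsilon\leq\bar\varepsilon$ this gives $V(\xi^{+})-V(\xi)\leq-\tfrac{\varepsilon}{2}\xi^{T}L\xi\leq 0$. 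This is the step where the quantitative bound $\bar\varepsilon$ is essential; the main obstacle is precisely controlling the $O(\varepsilon^{2})$ term from expanding $(I_{qn}-\varepsilon L)^{2}$, and the inequality $L\geq\bar\varepsilon L^{2}$ is tailored to absorb it into the $O(\varepsilon)$ dissipation term. The non-increase of $V$ immediately yields boundedness of $\xi(k)$, hence of every $\xi_{i}(k)$.

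With dissipation in hand, I would invoke the discrete-time LaSalle invariance principle: every solution converges to the largest positively invariant set $\M$ contained in $\{\xi:\xi^{T}L\xi=0\}=\{\xi:L\xi=0\}$. To finish, I would repeat the argument used at the end of the proof of Lemma~\ref{lem:CTstable}. If $\xi(k)\in\M$ for all $k\geq 0$, then $L\xi(k)\equiv 0$, so $H_{ij}(\xi_{j}(k)-\xi_{i}(k))\equiv 0$ for every pair and the coupling term in~\eqref{eqn:DTnominal} vanishes identically, reducing the dynamics to $\xi_{i}^{+}=Q\xi_{i}$. Writing $w_{ij}(k):=\xi_{j}(k)-\xi_{i}(k)=Q^{k}w_{ij}(0)$ and combining with $H_{ij}Q^{k}w_{ij}(0)\equiv 0$, condition (D4) forces $w_{ij}(0)=0$ for each $H_{ij}\neq 0$. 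Connectivity of $\setH$ (condition (D2)) then chains these equalities across the graph to give $\xi_{i}(k)\equiv\xi_{j}(k)$ for all $i,j$, contradicting any putative solution in $\M$ that fails synchronization. Hence $\|\xi_{i}(k)-\xi_{j}(k)\|\to 0$, as required.
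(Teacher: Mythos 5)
Your proposal is correct and follows essentially the same route as the paper: the same stacked form $\xi^{+}=[I_{q}\otimes Q](I_{qn}-\varepsilon L)\xi$, the same quadratic Lyapunov function (up to the harmless factor $\tfrac12$), the same use of $L\geq\bar\varepsilon L^{2}$ to absorb the $\varepsilon^{2}L^{2}$ term, and the same LaSalle/observability/connectivity argument on the invariant set. The only difference is cosmetic: the paper bounds $2L-\varepsilon L^{2}\geq L$ directly to get decrement $-\varepsilon\xi^{T}L\xi$, while you split the terms and get $-\tfrac{\varepsilon}{2}\xi^{T}L\xi$; and you spell out the invariant-set argument that the paper delegates to the proof of Lemma~\ref{lem:CTstable}.
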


\begin{proof}
Suppose $\varepsilon\in(0,\,\bar\varepsilon]$. Then we have
$2L-\varepsilon L^{2}\geq L$. Letting $\xi=[\xi_{1}^{T}\
\xi_{2}^{T}\ \cdots\ \xi_{q}^{T}]^{T}$ we can rewrite
\eqref{eqn:DTnominal} as
\begin{eqnarray*}
\xi^{+}=[I_{q}\otimes Q](I_{nq}-\varepsilon L)\xi\,.
\end{eqnarray*}
Since $H_{ij}=H_{ji}$ the matrix $L$ is symmetric. It is also
positive semidefinite (see the proof of Lemma~\ref{lem:CTstable}).
Since $Q$ is orthogonal we have
\begin{eqnarray*}
\lefteqn{([I_{q}\otimes Q](I_{nq}-\varepsilon L))^{T}[I_{q}\otimes
Q](I_{nq}-\varepsilon L)-I_{nq}}\\&&=(I_{nq}-\varepsilon
L)[I_{q}\otimes (Q^{T}Q)](I_{nq}-\varepsilon L)-I_{nq}\\
&&=(I_{nq}-\varepsilon L)^{2}-I_{nq}\\
&&=-2\varepsilon L+\varepsilon^{2}L^{2}\,.
\end{eqnarray*}
Therefore employing the Lyapunov function
$V(\xi)=\xi^{T}\xi=\|\xi\|^{2}$ we can write
\begin{eqnarray*}
V(\xi(k+1))-V(\xi(k))&=&-\varepsilon\xi^{T}(k)(2L-\varepsilon
L^{2})\xi(k)\\
&\leq&-\varepsilon\xi^{T}(k)L\xi(k)\,.
\end{eqnarray*}
Since $L$ is positive semidefinite the solution $\xi(k)$ has to be
bounded. (Hence the boundedness of the solutions $\xi_{i}(k)$.) In
particular, by LaSalle's invariance principle, $\xi(k)$ should
converge to the largest invariant set within the the intersection
$\{\xi:\xi^{T}\xi\leq
\|\xi(0)\|^{2}\}\cap\{\xi:\xi^{T}L\xi=0\}=:\M\subset\Real^{qn}$.
Using the same simple arguments employed in the proof of
Lemma~\ref{lem:CTstable}, one can show that in this largest
invariant set we have $\xi_{i}=\xi_{j}$ for all $i$,\,$j$.
\end{proof}
\\

The following fact is the discrete-time version of
Lemma~\ref{lem:CTnominal}. It will find use in the proof of the
discrete-time main result.

\begin{lemma}\label{lem:DTnominal}
Let $A\in\Real^{n\times n}$ be neutrally stable and the signal
$w:\Natural\to\Real^{n}$ satisfy $\|w(k)\|\leq ce^{-\alpha k}$ for
some constants $c,\,\alpha>0$. Then for each solution $x(k)$ of
the system $x(k+1)=Ax(k)+w(k)$ there exists $v\in\Real^{n}$ such
that $\|x(k)-A^{k}v\|\to 0$ as $k\to\infty$.
\end{lemma}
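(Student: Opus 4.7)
My plan is to mimic the continuous-time argument of Lemma~\ref{lem:CTnominal}, trading integrals for sums and the one-parameter group $e^{St}$ for the powers $Q^k$ of an orthogonal matrix. If $A$ is Schur stable the choice $v=0$ works at once because $A^k\to 0$ and $\|x(k)\|$ satisfies a convolution that inherits exponential decay from $w$. So the substance lies in the case where $A$ has eigenvalues on the unit circle.

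In that case I would use the discrete-time interpretation of neutral stability: the eigenvalues on the unit circle have $1\times 1$ Jordan blocks, so there exists an invertible $T$ with
\begin{eqnarray*}
T^{-1}AT=\left[\begin{array}{cc}Q&0\\0&F\end{array}\right]
\end{eqnarray*}
where $Q\in\Real^{n_1\times n_1}$ is orthogonal (real block-diagonal form with rotation blocks and $\pm 1$ entries) and $F\in\Real^{n_2\times n_2}$ is Schur. Applying the coordinate change $z=T^{-1}x$ and splitting $T^{-1}w=[\tilde w_1^T\ \tilde w_2^T]^T$ accordingly, the system decouples into
\begin{eqnarray*}
z_1(k+1)=Qz_1(k)+\tilde w_1(k),\qquad z_2(k+1)=Fz_2(k)+\tilde w_2(k),
\end{eqnarray*}
whose explicit solutions are $z_1(k)=Q^kz_1(0)+\sum_{j=0}^{k-1}Q^{k-1-j}\tilde w_1(j)$ and similarly for $z_2$. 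Since $F$ is Schur and $\tilde w_2$ decays exponentially, $z_2(k)\to 0$ by the standard stable-convolution estimate.

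For the $z_1$ component I would exploit orthogonality of $Q$ (so $Q^{-1}=Q^T$ and $\|Q^k\|=1$ for every $k\in\mathbb{Z}$) to define
\begin{eqnarray*}
a:=\sum_{j=0}^{\infty}Q^{-1-j}\tilde w_1(j),
\end{eqnarray*}
which converges absolutely because $\|Q^{-1-j}\|=1$ and $\|\tilde w_1(j)\|$ decays exponentially. Then the identity
\begin{eqnarray*}
z_1(k)-Q^k(z_1(0)+a)=-\sum_{j=k}^{\infty}Q^{k-1-j}\tilde w_1(j)
\end{eqnarray*}
shows $\|z_1(k)-Q^k(z_1(0)+a)\|\to 0$ as $k\to\infty$ by the same dominated-convergence / exponential-tail argument used in the continuous-time lemma. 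Setting $v:=T[\,(z_1(0)+a)^T\ \ 0\,]^T$, one obtains $x(k)-A^kv=T[z_1(k)-Q^k(z_1(0)+a);\ z_2(k)]$, which tends to zero in norm.

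I do not anticipate a serious obstacle: the only delicate point is verifying that $Q^{-1}$ is well-defined and that powers of $Q$ stay bounded in both directions, which is automatic once we use the real canonical form guaranteed by neutral stability. Everything else is a direct discrete analogue of the integrals in the proof of Lemma~\ref{lem:CTnominal}.
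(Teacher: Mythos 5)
Your argument is correct and is exactly the route the paper intends: the paper states Lemma~\ref{lem:DTnominal} without proof, calling it the discrete-time version of Lemma~\ref{lem:CTnominal}, and your proof is the faithful transliteration of that continuous-time proof (the decomposition into an orthogonal block $Q$ and a Schur block $F$, the absolutely convergent tail sum $a=\sum_{j\ge 0}Q^{-1-j}\tilde w_1(j)$ in place of the integral, and the same choice of $v$). No gaps.
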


The below algorithm is where we construct the gains $G_{ij}$ that
ensure synchronization under Assumption~\ref{assume:DTstable}. The
statement following the algorithm is the discrete-time counterpart
of Theorem~\ref{thm:CTstable}.

\begin{algorithm}\label{alg:DTone}
Given $A\in\Real^{n\times n}$ that is neutrally stable and the set
of matrices $\{C_{ij}\}$ with $C_{ij}\in\Real^{m_{ij}\times n}$,
obtain the set $\{G_{ij}\}$ with $G_{ij}\in\Real^{n\times m_{ij}}$
as follows. Let $n_{1}\leq n$ be the number of eigenvalues of $A$
on the unit circle and $n_{2}:=n-n_{1}$. If $n_{1}=0$ let
$G_{ij}:=0$. Otherwise, first choose $U\in\Real^{n\times n_{1}}$
and $W\in\Real^{n\times n_{2}}$ satisfying
\begin{eqnarray*}
[U\ \ W]^{-1}A[U\ \
W]=\left[\begin{array}{cc}Q&0\\0&F\end{array}\right]
\end{eqnarray*}
where $Q\in\Real^{n_{1}\times n_{1}}$ is orthogonal and
$F\in\Real^{n_{2}\times n_{2}}$ stable\footnote{In the
discrete-time sense. That is, all the eigenvalues of $A$ are on
the open unit disk.}. Then let $G_{ij}:=UQU^{T}C_{ij}^{T}$.
\end{algorithm}

\begin{theorem}\label{thm:DTstable}
Consider the systems~\eqref{eqn:DTsystem} under
Assumption~\ref{assume:DTstable}. Let the gains $G_{ij}$ be
constructed according to Algorithm~\ref{alg:DTone}. Also let $L$
be the matrix-weighted Laplacian~\eqref{eqn:genLap} with
$Q_{ij}:=U^{T}C_{ij}^{T}C_{ij}U$ and $\bar\varepsilon>0$ satisfy
$L\geq\bar\varepsilon L^{2}$. Then for all
$\varepsilon\in(0,\,\bar\varepsilon]$ the systems synchronize
under the controls~\eqref{eqn:DTcontrol}. Moreover, the solutions
$x_{i}(k)$ remain bounded.
\end{theorem}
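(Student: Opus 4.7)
The plan mirrors the three-step architecture of the proof of Theorem~\ref{thm:CTstable}, substituting Lemma~\ref{lem:DTstable} for Lemma~\ref{lem:CTstable} and Lemma~\ref{lem:DTnominal} for Lemma~\ref{lem:CTnominal}. The $n_{1}=0$ case is immediate because $A$ is then stable. For $n_{1}\geq 1$, I would first carry out the coordinate change $[\xi_{i}^{T}\ \eta_{i}^{T}]^{T}=[U\ W]^{-1}x_{i}$ with $\xi_{i}\in\Real^{n_{1}}$, $\eta_{i}\in\Real^{n_{2}}$, and set $H_{ij}:=C_{ij}U$. Using the same identities that drove the continuous-time derivation ($U^{\dagger}U=I_{n_{1}}$, $W^{\dagger}U=0$, $U^{\dagger}A=QU^{\dagger}$, $W^{\dagger}A=FW^{\dagger}$), the closed-loop system~\eqref{eqn:DTsystem}--\eqref{eqn:DTcontrol} decouples into
\begin{eqnarray*}
\xi_{i}^{+}&=&Q\xi_{i}+\varepsilon Q\sum_{j=1}^{q}H_{ij}^{T}H_{ij}(\xi_{j}-\xi_{i})+\varepsilon Q\sum_{j=1}^{q}H_{ij}^{T}C_{ij}W(\eta_{j}-\eta_{i})\\
\eta_{i}^{+}&=&F\eta_{i}\,.
\end{eqnarray*}

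In the second step I would verify that the nominal systems obtained by dropping the $\eta$-coupling term in the $\xi$-dynamics fall under Lemma~\ref{lem:DTstable}. Conditions (D1) and (D3) are inherited directly from $C_{ij}=C_{ji}$ and the algorithm's choice of $Q$. For (D2) and (D4) I would replay the chain of implications used in the continuous-time proof, namely $C_{ij}\neq 0\implies (C_{ij},A)$ detectable $\implies (H_{ij},Q)$ observable $\implies H_{ij}\neq 0\implies C_{ij}\neq 0$ (the last because $H_{ij}=C_{ij}U$). This yields the equivalence $H_{ij}\neq 0\Longleftrightarrow C_{ij}\neq 0$, so $\setH=\G$ (hence (D2)), and the observability of every nontrivial $(H_{ij},Q)$ (hence (D4)). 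Lemma~\ref{lem:DTstable} then delivers synchronization and boundedness of the nominal solutions for any $\varepsilon\in(0,\bar\varepsilon]$.

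Finally I would stack the states and rewrite the closed-loop array as
\begin{eqnarray*}
\left[\begin{array}{c}\xi^{+}\\ \eta^{+}\end{array}\right]=\left[\begin{array}{cc}[I_{q}\otimes Q](I_{qn_{1}}-\varepsilon L) & D\\ 0 & [I_{q}\otimes F]\end{array}\right]\left[\begin{array}{c}\xi\\ \eta\end{array}\right]
\end{eqnarray*}
for some $D$ whose structure is immaterial. By Lemma~\ref{lem:DTstable} the top-left block is neutrally stable and, by Algorithm~\ref{alg:DTone}, $[I_{q}\otimes F]$ is stable. The genuinely nonroutine step, and the one I expect to be the main obstacle, is verifying that the whole block-triangular matrix inherits (discrete-time) neutral stability: in the continuous-time proof this followed from a simple eigenvalue observation, but in discrete time one must rule out the possibility that the coupling $D$ enlarges the Jordan blocks associated with eigenvalues on the unit circle. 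I would handle this by examining the $k$-th power of the system matrix: its off-diagonal entry is a convolution of powers of the top-left block (uniformly bounded by Lemma~\ref{lem:DTstable}) with powers of the bottom-right block (exponentially decaying), and therefore stays uniformly bounded in $k$. This establishes boundedness of the $x_{i}(k)$ and, since $w(k):=D\eta(k)$ decays exponentially, Lemma~\ref{lem:DTnominal} applied to $\xi(k+1)=[I_{q}\otimes Q](I_{qn_{1}}-\varepsilon L)\xi(k)+w(k)$ produces a $v$ with $\|\xi(k)-([I_{q}\otimes Q](I_{qn_{1}}-\varepsilon L))^{k}v\|\to 0$. Synchronization of the nominal trajectory then forces $\|\xi_{i}(k)-\xi_{j}(k)\|\to 0$, and combining with $\eta(k)\to 0$ through $x_{i}-x_{j}=U(\xi_{i}-\xi_{j})+W(\eta_{i}-\eta_{j})$ closes the argument.
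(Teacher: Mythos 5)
Your proposal follows the paper's own proof essentially step for step: the same $n_{1}=0$ dismissal, the same coordinate change $[\xi_{i}^{T}\ \eta_{i}^{T}]^{T}=[U\ W]^{-1}x_{i}$ with $H_{ij}:=C_{ij}U$, the same verification of (D1)--(D4) via the detectability/observability implication chain, and the same block-triangular argument combining Lemma~\ref{lem:DTstable} with Lemma~\ref{lem:DTnominal}. The one place you go beyond the paper is in justifying why the block-triangular system matrix is neutrally stable (the paper simply asserts it), and your power-boundedness/convolution argument for that step is sound.
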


\begin{proof}
For $n_{1}=0$ the matrix $A$ is stable and the result follows
trivially. Let us hence consider the $n_{1}\geq 1$ case. Under the
suggested controls, dynamics of the systems~\eqref{eqn:DTsystem}
become
\begin{eqnarray}\label{eqn:DTclosedloop}
x_{i}^{+}=Ax_{i}+\varepsilon\sum_{j=1}^{q}UQU^{T}C_{ij}^{T}C_{ij}(x_{j}-x_{i})\,,\qquad
i=1,\,2,\,\ldots,\,q\,.
\end{eqnarray}
The fist step of the proof is to mold \eqref{eqn:DTclosedloop}
into something we have already studied. To this end, let
$U^{\dagger}\in\Real^{n_{1}\times n}$ and
$W^{\dagger}\in\Real^{n_{2}\times n}$ be such that
\begin{eqnarray*}
\left[\begin{array}{c}U^{\dagger}\\
W^{\dagger}\end{array}\right]=[U\ \ W]^{-1}\,.
\end{eqnarray*}
Then define $\xi_{i}\in\Real^{n_{1}}$ and
$\eta_{i}\in\Real^{n_{2}}$ through the following change of
coordinates
\begin{eqnarray*}
\left[\begin{array}{c}\xi_{i}\\
\eta_{i}\end{array}\right]=\left[\begin{array}{c}U^{\dagger}\\
W^{\dagger}\end{array}\right]x_{i}\,.
\end{eqnarray*}
Now, by letting $H_{ij}:=C_{ij}U$, we can transform
\eqref{eqn:DTclosedloop} into
\begin{subeqnarray}\label{eqn:DTgomlek}
\xi_{i}^{+}&=&Q\xi_{i}+\varepsilon Q\sum_{j=1}^{q}H_{ij}^{T}H_{ij}(\xi_{j}-\xi_{i})+\varepsilon Q\sum_{j=1}^{q}H_{ij}^{T}C_{ij}W(\eta_{j}-\eta_{i})\\
\eta_{i}^{+}&=&F\eta_{i}
\end{subeqnarray}
thanks to the identities $U^{\dagger}U=I_{n_{1}}$ and
$W^{\dagger}U=0$. The first step is complete.

In the second step we claim that the following nominal systems
\begin{eqnarray}\label{eqn:nominalDT}
\xi^{\rm nom}_{i}(k+1)&=&Q\xi_{i}^{\rm nom}(k)+\varepsilon
Q\sum_{j=1}^{q}H_{ij}^{T}H_{ij}(\xi_{j}^{\rm nom}(k)-\xi_{i}^{\rm
nom}(k))
\end{eqnarray}
synchronize. The claim follows from Lemma~\ref{lem:DTstable} once
the conditions (D1)-(D4) are shown to be satisfied by the pair
$(Q,\{H_{ij}\})$. This we can achieve by emulating the part the
proof of Theorem~\ref{thm:CTstable} where the conditions (C1)-(C4)
were shown to hold for the systems~\eqref{eqn:CTnominal}.

We begin the last step by stacking the states $\xi=[\xi_{1}^{T}\
\xi_{2}^{T}\ \cdots\ \xi_{q}^{T}]^{T}$ and $\eta=[\eta_{1}^{T}\
\eta_{2}^{T}\ \cdots\ \eta_{q}^{T}]^{T}$. Then
\eqref{eqn:DTgomlek} can be rewritten as
\begin{eqnarray}\label{eqn:DTxisystem}
\left[\begin{array}{c}\xi^{+}\\\eta^{+}\end{array}\right]
=\left[\begin{array}{cc}[I_{q}\otimes Q](I_{nq}-\varepsilon
L)&D\\0&[I_{q}\otimes
F]\end{array}\right]\left[\begin{array}{c}\xi\\\eta\end{array}\right]
\end{eqnarray}
for some $D\in\Real^{qn_{1}\times qn_{2}}$. By
Lemma~\ref{lem:DTstable} the solutions of the
systems~\eqref{eqn:nominalDT} are bounded. This implies that the
block $[I_{q}\otimes Q](I_{nq}-\varepsilon L)$ has to be neutrally
stable. Also, since $F$ is stable by Algorithm~\ref{alg:DTone},
the block $[I_{q}\otimes F]$ is stable. Hence the block triangular
system matrix in \eqref{eqn:DTxisystem} is neutrally stable,
guaranteeing the boundedness of the solutions of the
systems~\eqref{eqn:DTgomlek}. Consequently, the solutions
$x_{i}(k)$ of the systems~\eqref{eqn:DTclosedloop} remain bounded.
To show that all $x_{i}(k)$ converge to a common trajectory we
once again look at the system~\eqref{eqn:DTxisystem}. The solution
$\eta(k)$ and, in particular, the term $D\eta(k)$ decay
exponentially because $[I_{q}\otimes F]$ is stable. Since
$[I_{q}\otimes Q](I_{nq}-\varepsilon L)$ is neutrally stable,
Lemma~\ref{lem:DTnominal} applies to the dynamics
$\xi(k+1)=[I_{q}\otimes Q](I_{nq}-\varepsilon L)\xi(k)+D\eta(k)$
and allows us to assert that there exists some
$v\in\Real^{qn_{1}}$ such that $\|\xi(k)-([I_{q}\otimes
Q](I_{nq}-\varepsilon L))^{k}v\|\to 0$ as $k\to\infty$. In other
words the solutions $\xi_{i}(k)$ converge to the solutions
$\xi^{\rm nom}_{i}(k)$ of the nominal
systems~\eqref{eqn:nominalDT} with $\xi^{\rm nom}(0)=v$, i.e.,
$\|\xi_{i}(k)-\xi_{i}^{\rm nom}(k)\|\to 0$. We know by
Lemma~\ref{lem:DTstable} that the nominal solutions $\xi^{\rm
nom}_{i}(k)$ converge to a common trajectory. This allows us to
claim for the actual solutions that $\|\xi_{i}(k)-\xi_{j}(k)\|\to
0$ for all $i,\,j$. We also have $\|\eta_{i}(k)-\eta_{j}(k)\|\to
0$ since $\eta(k)\to 0$. The synchronization of the
systems~\eqref{eqn:DTclosedloop} then follows.
\end{proof}

\section{Notes}

All the results in the paper rest on the symmetry of the
underlying matrix-weighted Laplacian matrix. In other words, we
only consider the case where the graph (whose edges are assigned
matrix values) representing the network topology is undirected.
Now, for synchronization problems involving a scalar-weighted
Laplacian, the symmetry assumption has long been shed because it
is redundant. This raises the following question. Can we still
guarantee synchronization if we remove the symmetry condition on
the matrix-weighted Laplacian? A more technical, but much easier
to answer version of this question is: Can we remove the condition
(C1) from Lemma~\ref{lem:CTstable}? The answer is no. Below is a
counterexample.

\begin{example}
Consider the following three coupled systems in $\Real^{2}$
\begin{eqnarray}\label{eqn:counterex}
{\dot
x}_{i}=Sx_{i}+\sum_{j=1}^{3}H_{ij}^{T}H_{ij}(x_{j}-x_{i})\,,\qquad
i=1,\,2,\,3
\end{eqnarray}
with $H_{ii}=0$ and
\begin{eqnarray*}
\begin{array}{rclrcl}
H_{12}&=&\left[\begin{array}{rr}1.9006&1.8406\\1.8406&4.0758\end{array}\right]\,,\quad&
H_{13}&=&\left[\begin{array}{rr}1.0382&0.9603\\0.9603&6.2512\end{array}\right]\,,\\&&&&&\\
H_{21}&=&\left[\begin{array}{rr}3.8896&3.1418\\3.1418&4.7041\end{array}\right]\,,\quad&
H_{23}&=&\left[\begin{array}{rr}6.4288&-1.6342\\-1.6342&1.5263\end{array}\right]\,,\\&&&&&\\
H_{31}&=&\left[\begin{array}{rr}2.2944&-1.9328\\-1.9328&6.5011\end{array}\right]\,,\quad&
H_{32}&=&\left[\begin{array}{rr}4.9157&-3.9794\\-3.9794&3.6283\end{array}\right]\,.
\end{array}
\end{eqnarray*}
All these nonzero $H_{ij}$ are nonsingular. Now, the graph $\setH$
associated to the set $\{H_{ij}\}$ is complete because $H_{ij}\neq
0$ for all $i\neq j$. Therefore $\setH$ is connected. Take
$S\in\Real^{2\times 2}$ to be the zero matrix $S=0$. Then $S$ is
trivially skew-symmetric. Also, all the pairs $(H_{ij},\,S)$ are
observable for $H_{ij}\neq 0$ since the nonzero $H_{ij}$ are full
column rank. Hence the systems~\eqref{eqn:counterex} satisfy the
conditions (C2)-(C4) of Lemma~\ref{lem:CTstable}. The only
condition being violated is (C1) because $H_{ij}\neq H_{ji}$.
Without this condition the symmetry of the Laplacian
\begin{eqnarray*}
L=\left[\begin{array}{ccc}
H_{12}^{T}H_{12}+H_{13}^{T}H_{13}&-H_{12}^{T}H_{12}&-H_{13}^{T}H_{13}\\
-H_{21}^{T}H_{21}&H_{21}^{T}H_{21}+H_{23}^{T}H_{23}&-H_{23}^{T}H_{23}\\
-H_{31}^{T}H_{31}&-H_{32}^{T}H_{32}&H_{31}^{T}H_{31}+H_{32}^{T}H_{32}
\end{array}\right]
\end{eqnarray*}
is broken and the synchronization is not achieved for this
example. In particular, the array dynamics $\dot{x}=-Lx$  has an
unstable eigenvalue $\lambda=4.0312$ whose eigenvector does not
belong to the synchronization subspace
$\{x:x_{1}=x_{2}=x_{3}\}\subset\Real^{6}$.
\end{example}

The previous example sheds some light on the symmetry issue by
saying that the undirectedness of the network graph (though it
might still be a conservative constraint) is not altogether
removable when one wants to achieve synchronization under
matrix-weighted Laplacian. Another issue we would like to address,
in order to have a better feel of the degree of necessity of
certain assumptions, is related to the
condition~\eqref{eqn:epsilonsigma} in
Theorem~\ref{thm:CTunstable}. Once $\varepsilon$ and $\sigma$ are
fixed, since $\lambda_{2}(\Gamma)$ is a measure of graph
connectivity, the equation~\eqref{eqn:epsilonsigma} can be
interpreted as: {\em the more connected the network graph the more
likely the synchronization.} In fact, as stated in
Corollary~\ref{cor:complete}, in the limiting case where the graph
is complete, the synchronization is certain under the feedback
gains $G_{ij}=\alpha P^{-1}C_{ij}^{T}$ for large enough coupling
coefficient $\alpha$. One can also show that when all the output
matrices are identical (up to a scaling) $C_{ij}=\rho_{ij} C$
(with $\rho_{ij}=\rho_{ji}$) connectedness of the graph is enough
for synchronization (for large enough $\alpha$). Now it is
impossible not to ask the next question. Can we remove the
completeness assumption from Corollary~\ref{cor:complete}? The
answer is once again negative as shown by the counterexample
below.

\begin{example}
Consider five systems in $\Real^{3}$ with
dynamics~\eqref{eqn:CTsystem}. We take
\begin{eqnarray*}
A=\left[\begin{array}{rrr}
0.4429 & 0.4871 & 0.7504\\
0.7265 & -1.5839 & -1.8779\\
0.0154 & 1.3969 & 1.5767
\end{array}\right]
\end{eqnarray*}
This system matrix $A$ is not stable due to an eigenvalue at
$\lambda=0.9678$. As for the output matrices, the nonzero $C_{ij}$
are
\begin{eqnarray*}
\begin{array}{ccccl}
C_{12} &=& C_{21} &=& [\ 1\ \ \ 0\ \ \ 0\ ]\,,\\\vspace{-0.1in}\\
C_{23} &=& C_{32} &=& [\ 3.3036\ \ \ 0.1565\ \ \ 0.1265\ ]\,,\\\vspace{-0.1in}\\
C_{34} &=& C_{43} &=& [\ 3.7854\ \ \ 1.3147\ \ \ 3.4819\ ]\,,\\\vspace{-0.1in}\\
C_{45} &=& C_{54} &=& [\ 4.6054\ \ \ 1.8354\ \ \ 3.1269\ ]\,.
\end{array}
\end{eqnarray*}
The associated graph $\G$ with five vertices
$\{v_{1},\,v_{2},\,\ldots,\,v_{5}\}$ has a simple chain structure
$v_{1}\longleftrightarrow v_{2}\longleftrightarrow
v_{3}\longleftrightarrow v_{4}\longleftrightarrow v_{5}$. Hence
$\G$ is connected, but not complete. Also, these systems are
CL-detectable~\eqref{eqn:commonP} with the following symmetric
positive definite matrix
\begin{eqnarray*}
P=\left[\begin{array}{rrr}
0.6209 & -0.1396 & -0.2605\\
-0.1396 & 0.0677 & 0.0997\\
-0.2605 & 0.0997 & 0.1815
\end{array}\right]
\end{eqnarray*}
In particular, we have
\begin{eqnarray*}
\lambda_{\rm
max}(A^{T}P+PA-C_{ij}^{T}C_{ij})\leq-0.0047\quad\mbox{for
all}\quad C_{ij}\neq 0\,.
\end{eqnarray*}
Note that all the conditions listed in
Assumption~\ref{assume:CTunstable} are satisfied for our example.
Suppose now that we couple these five systems through the feedback
gains $G_{ij}=\alpha P^{-1}C_{ij}^{T}$ (suggested in
Corollary~\ref{cor:complete}) where we leave the coupling
coefficient $\alpha>0$ as a design parameter. Since the graph $\G$
is not complete, Corollary~\ref{cor:complete} is silent, meaning
that we have to resort to simulation results to determine whether
synchronization can be achieved for large enough $\alpha$.
Consider now the matrix $\Psi$ representing the coupled array
dynamics~\eqref{eqn:CTunstablearray}. In our case,
$\Psi\in\Real^{15\times 15}$ has 15 eigenvalues. Out of these 15
eigenvalues, the three of them, say
$\lambda_{1},\,\lambda_{2},\,\lambda_{3}$, equal the three
eigenvalues of $A$ and their eigenvectors belong to the
synchronization subspace
$\{x:x_{1}=x_{2}=x_{3}=x_{4}=x_{5}\}\subset\Real^{15}$. For
synchronization to take place, it is necessary that all the
remaining eigenvalues
$\lambda_{4},\,\lambda_{5},\,\ldots,\,\lambda_{15}$ are in the
open left half-plane. Fig.~5 displays the variation of
$\rho:=\max_{i\geq 4}{\rm Re}(\lambda_{i})$ with respect to
$\alpha$. Note that $\rho$ never gets negative. In fact it seems
to satisfy $\rho(\alpha)\geq 0.0418$ for all $\alpha$. Hence, for
the example at hand, synchronization cannot be achieved by
adjusting the coupling coefficient.

\begin{figure}[h]
\begin{center}
\includegraphics[scale=0.65]{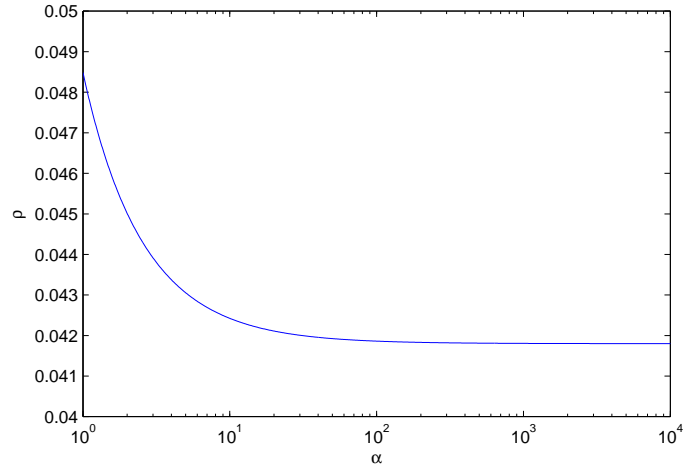}
\caption{Variation of $\rho$ with respect to the coupling
coefficient $\alpha$.}
\end{center}
\end{figure}\label{fig:rho}

\end{example}

\bibliographystyle{plain}
\bibliography{references}
\end{document}